\documentclass[11pt]{amsart}
\usepackage[utf8]{inputenc}
\usepackage{graphicx}
\usepackage{xstring}
\usepackage{forloop}
\usepackage{bbm, amsmath, amsfonts, amscd, latexsym, amsthm, amssymb, graphicx, mathrsfs, comment}
\usepackage[colorlinks=true]{hyperref}

\usepackage{abstract}
\usepackage{mathtools}
\usepackage{caption}
\usepackage{MnSymbol}
\usepackage[dvipsnames]{xcolor}

\usepackage{tikz-cd}
\tikzset{
  symbol/.style={
    draw=none,
    every to/.append style={
      edge node={node [sloped, allow upside down, auto=false]{$#1$}}}
  }
}

\makeatletter
\newif\if@check@engine  \@check@enginetrue 
\makeatother
\usepackage[usenames,dvipsnames]{pstricks}

\usepackage{booktabs}

\newtheorem{theor}{\hspace{1cm}{\sc Theorem}}[section]
\newtheorem{utver}[theor]{\hspace{1cm}{\sc Proposition}}
\newtheorem{sledst}[theor]{\hspace{1cm}{\sc Corollary}}
\newtheorem{lemma}[theor]{\hspace{1cm}{\sc Lemma}}

\newtheorem{assum}[theor]{\hspace{1cm}{\sc Assumption}}
\newtheorem*{utver*}{\hspace{1cm}{\sc Proposition}}
\theoremstyle{definition}

\newtheorem{defin}[theor]{\hspace{1cm}{\sc Definition}}
\newtheorem{exa}[theor]{\hspace{1cm}{\sc Example}}

\newtheorem{rem}[theor]{\hspace{1cm}{\sc Remark}}

\newcommand{\Vol}{\mathop{\rm Vol}\nolimits}

\newcommand{\codim}{\mathop{\rm codim}\nolimits}

\newcommand{\sing}{\mathop{\rm sing}\nolimits}

\newcommand{\conv}{\mathop{\rm conv}\nolimits}

\newcommand{\MV}{\mathop{\rm MV}\nolimits}

\newcounter{idx}

\newcommand{\rotraise}[1]{
  \StrLen{#1}[\slen]
  \forloop[-1]{idx}{\slen}{\value{idx}>0}{
    \StrChar{#1}{\value{idx}}[\crtLetter]
    \IfSubStr{tlQWERTZUIOPLKJHGFDSAYXCVBNM}{\crtLetter}
      {\raisebox{\depth}{\rotatebox{180}{\crtLetter}}}
      {\raisebox{1ex}{\rotatebox{180}{\crtLetter}}}}
}

\addtolength{\oddsidemargin}{-0.6in}
\addtolength{\evensidemargin}{-0.6in}
\addtolength{\textwidth}{1.2in}

\renewcommand{\emph}[1]{{\it {\color{NavyBlue} #1}}}

\def\R{\mathbb R}

\def\Z{\mathbb Z}
\def\Q{\mathbb Q}
\def\C{\mathbb C}
\def\CC{({\mathbb C}^\star)}

\def\CP{\mathbb C\mathbb P}

\emergencystretch=7pt

\setcounter{tocdepth}{1}

\begin{document}

\begin{center}{\Large \sc Engineered complete intersections: slightly degenerate Bernstein--Kouchnirenko-Khovanskii}

\vspace{3ex}

{\sc Alexander Esterov} 
\end{center}

\begin{abstract}
Geometry of sparse systems of polynomial equations (i.e. the ones with prescribed monomials and generic coefficients) is well studied in terms of their Newton polytopes. The results of this study are colloquially known as the Bernstein--Kouchnirenko--Khovanskii toolkit, and unfortunately are not applicable to many important systems, whose coefficients slightly fail to be generic.

This for instance happens if some of the equations are obtained from another one by taking partial derivatives or permuting the variables, or the equations are linear, realizing a non-trivial matroid, or in more advanced settings such as generalized Calabi--Yau complete intersections. 

Such interesting examples (as well as many others) turn out to belong to a natural class of ``systems of equations that are nondegenerate upon cancellations''. 
We extend to this class several classical and folklore results of the Bernstein--Kouchnirenko--Khovanskii toolkit, such as the ones regarding the number and regularity of solutions, their irreducibility, tropicalization and Calabi--Yau-ness.

\end{abstract}

\tableofcontents

\section{Introduction}\label{sconcrintro}
\subsection{The BKK toolkit} Upon choosing a coordinate system $(x_1,\ldots,x_n):T\xrightarrow{\sim}\CC^n$ in a complex torus $T$, its characters $m:T\to\CC$ can be written as monomials $m(x)=x^a:=x_1^{a_1}\cdots x_n^{a_n},\,a\in\Z^n$, and thus form an integer lattice $M\simeq\Z^n$. Linear combinations of these characters are called {\it Laurent polynomials} and regarded as functions on $T$. For a finite set of monomials $A\subset M$, their linear combinations are said to be {\it supported} at $A$ and form a vector space $\C^A$.
Given $k$ support sets $A_i\subset M$, a collection of Laurent polynomials $$f=(f_1,\ldots,f_k)\in\C^{\mathcal A}:=\C^{A_1}\oplus\cdots\oplus\C^{A_k}\eqno{(*)}$$defines an algebraic set $\{f=0\}
\subset T$.

Discrete characteristics of geometric nature (such as the number of components or the Euler characteristics) take the same value on the set $\{f=0\}$ for almost all $f\in\C^{\mathcal A}$, and this typical value has an expression (of equally geometric nature) in terms of the support ${\mathcal A}$. 
Kouchnirenko, Berstein, Khovanskii, Varchenko and others found such expressions for several important characteristics of $\{f=0\}$. 
Recall some of these theorems, and the genericity assumption they impose on $f$. 
\begin{defin}\label{defnondeg000}
1. For a linear function $l\in M^*$, the {\it $l$-degree} of a monomial $x^a$ is $l(a)$.

2. The {\it $l$-leading part} $f^l_i$ of a polynomial $f_i$ is
the sum of its terms of the maximal $l$-degree.

3. A system of equations $f=0$ is nondegenerate, if $0$ is a regular value of the map $f^l:=(f_1^l,\ldots,f_k^l)$ for every $l\in M^*$ (or, equivalently, $f^l=0$ is a {\it regular system of equations}).

4. {\it Almost all} $f\in\C^{\mathcal{A}}$ (i.e. all $f$ outside a suitable hypersurface in $\C^{\mathcal{A}}$) are nondegenerate. The smallest such exceptional hypersurface is well defined (\cite{adv}) and called the {\it discriminant}. 
\end{defin}
\begin{exa}\label{exabkk00}
1. For $l(a)=a_1+\cdots+a_n$, we get the usual notion of the degree and the leading part. 
For almost all polynomials of prescribed degrees $f_i$, the tuple $f$ is nondegenerate, and such $f=0$ defines a regular subvariety in $\CP^n$, transversal to every coordinate plane.

2. Assume $f=(f_1,\ldots,f_k)$ is nondegenerate, and $A_i\subset M$ is the set of monomials participating in $f_i$ (called the {\it support set}).
Then, for $k=n$, the number of roots of $f=0$ equals the mixed volume (Definition \ref{defpolyh}) of the convex hulls of $A_1,\ldots,A_n$ (called the {\it Newton polytopes}): $$|\{f=0\}|=A_1\cdots A_n:=\MV(A_1,\ldots,A_n).$$For $k=2$, the manifold $\{f_1=f_2=0\}$ has Euler characteristics $$e(A_1,A_2):=(-1)^{n}\sum\nolimits_{k=1}^{n-1}A_1^kA_2^{n-k}
,\eqno{(**)}$$and is connected unless $A_1$ and $A_2$ belong to parallel 2-planes, or one of them to a line. 

To review of this BKK toolkit, see the original papers \cite{bernst}, \cite{kh75} and \cite{kh15} respectively, or Section \ref{sbkk}. In particular, see Theorem \ref{bkk1} and Example \ref{exared} for the Euler characteristics and connectedness of $\{f=0\}$ respectively if $k>2$.
\end{exa}

\subsection{A BKK toolkit for one slightly degenerate system of equations}  \label{s12}

If the system $f_1=f_2=0$ is degenerate (again assuming $k=2$ for simplicity), we might still hope that:

(1) $f^l_1=0$ defines a regular hypersurface $H\subset T$;
and

(2) the bad leading part $f^l_2$ can be cancelled in a suitable difference $\tilde f_2:=f_2-gf_1$ so that $\tilde f^l_2=0$, unlike its predecessor $f^l_2=0$, is a regular equation on $H$.
\begin{exa}\label{exa000}
Let $f_1\in\C^{A_1}$ be generic, and $f_2$ be $\partial f_1/\partial x_1$. Then the system $f_1=f_2=0$ 
is degenerate: once all points of $A$, at which $l:\Z^n\to\Z$ takes its maximal value, have the first coordinate $\lambda$, we have $f_2^l=\lambda f_1^l/x_1$. However $\tilde f_2:=f_2-\lambda f_1/x_1$ justifies the above hope.
\end{exa}

This observation leads to a BKK toolkit for the set $\{f=\partial f/\partial x_1=0\}$ defined by generic $f\in\C^A,\,A\subset\Z^n$.
To express the geometry of this set in terms of $A$, let $H_b\subset\Z^n$ be the hyperplane of points whose first coordinate equals $b$. 
Define the {\it incremental polytope} $\hat A$ of this complete intersection as the intersection of the convex hulls of $(A+A)\setminus H_{b}$ over all $b\in\Z$.

\begin{theor}[proved in Section \ref{scancelcrit}]\label{th0cci}
1. The set $\hat A$ is a lattice polytope. 

2. Almost all polynomials $f\in\C^A$ satisfy the condition of Example \ref{exa000} and define a smooth complete intersection $S_1=\{f=\partial f/\partial x_1=0\}$.
All such manifolds $S_1$ are diffeomorphic.

3. Their Euler characteristics equals (see remark below for how to evaluate these formulas):
$$
\frac{A}{1-A}\cdot \frac{\hat A-A}{1-\hat A+A}= e(\hat A-A,A)=e(A,A)-\sum\nolimits_{b\in\Z}\left(e(A,A)-e(A,A\setminus H_b)\right).$$

4. Their tropical fan equals $[\hat A]\cdot[A]-[A]\cdot[A]$ (see remark below for how to evaluate).

5. They are irreducible (equivalently, connected), if $A$ can be shifted to the interior of $\hat A$.

6. Their closure in the $\hat A$-toric variety are Calabi--Yau, if $\hat A$ is reflexive.
\end{theor}
\begin{rem}\label{remcritci}
1. To evaluate the first formula for the Euler characteristics, expand $1/(1-X)$ as $1+X+X^2+\cdots$, open the brackets, and evaluate every monomial of the form $A_1\cdot\cdots\cdot A_n$ as the mixed volue of the convex hulls of $A_1,\ldots,A_n$ (ignoring monomials of degrees different from $n$). For the definition of $e(\cdot,\cdot)$ in the further formulas, see $(**)$ in Example \ref{exabkk00}. 

2. To evaluate the tropical fan, recall that $[X]$ denotes the dual tropical fan to the convex hull of $X$ (i.e. the corner locus of the support function of $X$), and then evaluate the expression in the ring of tropical fans, see e.g. \cite{ms}. 

3. The irreducibility condition (Part 5) is not a criterion. It would be interesting to classify $A$ such that $S_1$ is reducible (or at least to understand to what extent this classification is finite).
Indeed, there do exist support sets $A$ for which the curve $S_1$ is not connected.

For example, if $n=3$ and $A\subset H_b\cup H_{b'}$, then $S_1$ consists of several lines parallel to the first coordinate axis. To see this, rewrite the defining system of equations $f=f_{x_1}=0$ as $(f-bx_1f_{x_1})/x_1^b=(f-b'x_1f_{x_1})/x_1^{b'}=0$, which do not depend on $x_1$.

Similarly, if $A\subset H_b\cup(\Z\cdot a)$ for $a\in\Z$, then $S_1$ has several components in cosets of the form $\{x^a=const\}\subset\CC^3$ (cf Example \ref{exared} and Theorem \ref{bkk3} for the classical BKK toolkit).

4. When it comes to computing the Euler characteristics of $S_1$ for particular support sets, the parenthesized differences in the second formula for the euler characteristics can be simplified: for instance, for $n=3$, first, $e(A,A)-e(A,A\setminus H_b)=\MV(A,A,A)-\MV(A,A\setminus H_b,A\setminus H_b)=\sum_E \MV(A,A,A)-\MV(A,A\setminus E,A\setminus E)$ over all edges $E\subset H_b$ of the convex hull $\conv A$.

Second, denoting by $\pi_E$ a surjection of lattices $\Z^3\to\Z^2$ sending the edge $E$ to a point $e\in\Z^2$, the difference $\MV(A,A,A)-\MV(A,A\setminus E,A\setminus E)$ equals the lattice length of $E$ times the lattice area of the polygon $(\conv\pi_EA)\setminus\conv(\pi_EA\setminus e)$, see Proposition \ref{answerlink3}.

\end{rem}

\subsection{Nondegeneracy upon cancellations.} 
We come back to the hope in the beginning of Section \ref{s12}. While in general this hope might look both unmotivated and greedy, it is actually close to $f=0$ being a sch\"on complete intersection (SCI) in the sense of \cite{schon} (even equivalent to it once $H$ is connected, see Theorem \ref{thnewtcinondegexplic}). Moreover, it covers all of our motivating examples of SCIs (such as Example \ref{exa00}). 
We now formulate it for an arbitrary number of equations $k$.
\begin{defin}\label{defnuc}
1. Laurent polynomials $f=(f_1,\ldots,f_k)$ on the torus $T$ are said to be {\it cancellable}, if, for every non-zero $l\in M^*$, we have an upper unitriangular $k\times k$ matrix of Laurent polynomials $C_l$ such that the {\it $l$-cancelled system of equations} $f_l:=(f\cdot C_l)^l=0$ defines a {\it complete intersection} (i.e. $\codim\{f_{l,1}=\cdots=f_{l,i}=0\}=i$ for every $i\leqslant k$). 

2. The tuple $f$ is said to be {\it nondegenerate upon cancellations} $C_l$ (abbreviated to NUC), if $f_{l,1}=\cdots=f_{l,i}=0$ is a regular system of equations for every $l\ne 0$ and  $i\leqslant k$.

\end{defin}

We shall describe the Euler characteristics, connectedness, tropicalization and Calabi--Yau-ness of arbitrary NUC complete intersections (Proposition \ref{tropcancel} and Theorem \ref{thcatchup1}, or more restrictive and elementary Theorem \ref{thcatchup}). This general answer will be made explicit for some of the following particularly interesting examples, defined by a generic polynomial $f\in \C^A$. 

\begin{exa}\label{exa00}
{\bf (1) Engineered complete intersections.} Let $B_1,\ldots,B_q\subset\Z^n$ be finite sets, 
and $f_1,\ldots,f_k$ be given linear combinations of generic Laurent polynomials $g_i\in\C^{B_i}$ (where the genericity assumption depends on the given linear combinations). Then the system $f_1=\cdots=f_k=0$ is NUC (Corollary \ref{corolccinondeg}). The case of one-point support sets $B_i$ is the most general, but considering larger supports is convenient for engineering complete intersections with prescribed geometry, see for instance Example 1.3.6 
in \cite{schon}. 

This should not be confused with another generalization of the BKK toolkit: we take the same linear combinations $f_j$ of polynomials $g_i$, but this time the latters are arbitrarily fixed, while the formers are generic. Then the Euler characteristics of $f_1=\cdots=f_k=0$ equals the signed volume of the respective Newton--Okounkov body \cite{kk}. If now $g_i$'s are especially nice (formalized in terms of SAGBI/canonical/Khovanskii's bases), then the NO body is a lattice polytope, making the answer combinatorially tractable. This does not overlap with our story and was elaborated in \cite{bmmt}, with an initial motivation to study the harmonic balance equations for Duffing oscillators (Theorem 3.1 therein; by a funny coincidence, that system is at the same time engineered too).

See Section \ref{scancelcrit} and especially Proposition \ref{remmainengin} for the summary of what we know about the geometry of engineered complete intersections (including formulas for the Euler characteristics and tropical fan, sufficient conditions for irreducibility and Calabi--Yau-ness). These general results specialize, in particular, to the following interesting special cases.

{\bf (2) Critical complete intersections.} Expanding on Example \ref{exa000}, let $g\in\C^{\mathcal A}$ be a generic tuple of Laurent polynomials. 
Then any system of equations formed by partial derivatives (or antiderivatives, of any orders, or leading parts) of $g_i$'s is engineered. Examples include:

-- the critical locus of the projection of the hypersuarface $\{f=0\}$ to the first coordinate hyperplane, given by the equations $f=\partial f/\partial x_1=0$ (studied in Theorem \ref{th0cci});

-- the $k$-th Thom-Bordmann stratum of this projection: $f=\partial f/\partial x_1=\cdots=\partial^k f/\partial x_1^k=0$; 

-- the critical points of a coordinate function restricted to $\{f=0\}$, given by the equations $f=\partial f/\partial x_2=\cdots=\partial f/\partial x_n=0$ (studied in Theorem \ref{thconcr2} below);

-- the critical points of $f$: $\partial f/\partial x_1=\partial f/\partial x_2=\cdots=\partial f/\partial x_n=0$ (studied in Theorem \ref{thconcr2});

\noindent Unlike in the classical BKK, the convex hull of $A$ does not determine topology of these sets.

{\bf (3) Symmetric complete intersections.} The complete intersection $f(x_1,x_2,x_3,\ldots,x_n)=f(x_2,x_1,x_3,\ldots,x_n)=0$ 
splits into components that are NUC. We study such symmetric varieties here (Theorem \ref{th0sci}, similar to \ref{th0cci}) and then in a separate paper \cite{symm}, motivated by their applications to Galois theory and using the techniques from the present paper.

{\bf (4) Hyperplane arrangement complements.} They are defined by systems of linear equations in the torus. Every such system (not only generic one) is engineered, hence NUC. 

{\bf (5) Generalized Calabi--Yau complete intersections.} They are defined by special systems of polynomial equations in products of projective spaces \cite{gcicy} and are actively studied in physics (see \cite{bh} for an overview). These systems of equations are degenerate, i.e. do not belong to CICYs that are classically studied as in \cite{bb94}. 

{\bf (6) Qualitative polynomial optimization.} Given polynomials $f_0,\ldots,f_k$, the real extrema of $f_0$ on $f_1=\cdots=f_k=0$ are studied by polynomial optimization, and the number of complex extrema is known as the {\it algebraic degree}. The most studied cases include:

-- {\bf the maximum likelihood (ML) degree} \cite{chks}: $f_0$ is a generic monomial (of great geometric importance regardless of the optimization perspective, see e.g. \cite{huh});

-- {\bf the polar degree:} $f_0$ is generic linear (of great geometric importance regardless of the optimization perspective, see e.g. \cite{dp} or, more recently, \cite{sst});

-- {\bf the euclidean distance (ED) degree:} $f_0$ is the distance to a generic point \cite{dhost}.

The algebraic degree is the number of critical points of the Lagrange multipliers function $F(x,\lambda)=f_0(x)+\lambda_1f_1(x)+\cdots+\lambda_kf_k(x)$. If $f_i$'s are generic polynomials supported at $A_i\subset \Z^n$, then $F$ is a generic polynomial supported at $A_0*\cdots *A_k:=\bigcup_ie_i \times A_i\subset\Z^k\times\Z^n$, where $e_0,\ldots,e_k$ are the vertices of the standard simplex in $\Z^k$. Thus, the algebraic degree is expressed in terms of $A_i$'s by Theorem \ref{thconcr2} below. This extends to the setting when $f_1=\cdots=f_k=0$ is NUC.

\end{exa}

For a face $\Gamma$ of the convex hull of $A\subset\Z^n$, let $(-1)^{n-\dim\Gamma}e^\Gamma_A$ be the Euler obstruction of the $A$-toric variety at its $\Gamma$-orbit (see Proposition \ref{eulobstr} for its combinatorial expression). 
By $\Vol \Gamma$ we denote the lattice volume of $\Gamma$: this is the volume form on the affine span ${\rm aff}\, \Gamma$ normalized so that the minimal volume of a lattice simplex in ${\rm aff}\, \Gamma$ equals 1.
\begin{theor}[proved in Section \ref{proofth3}] \label{thconcr2}
For finite $A\subset\Z^n$ and generic $f\in\C^A$ (as in Remark \ref{remgenerdf}), the strata $S^1=\{f=\partial f/\partial x_2=\cdots=\partial f/\partial x_n=0\}$ and $\{df=0\}$ consist of $\sum_{\Gamma}e_A^\Gamma\Vol \Gamma$ points, where $\Gamma$ ranges over the following collections of faces of $\conv A$ (including $\conv A$ itself):

-- faces parallel to the first coordinate axis, for $S^1$;

-- faces whose affine span contains $0\in\Z^n$, for $\{df=0\}$ \newline (in the latter case we additionally assume with no loss of generality that $0\notin A$).
\end{theor}
\begin{rem}
In the absence of ``bad'' faces $\Gamma$ (e.g. if 0 is in the interior of the convex hull of $A$), the answer is just the lattice volume of $A$. If $A$ is a Delzant polytope (i.e. the $A$-toric variety is smooth), then all Euler obstructions equal $1$. Even in this generality, applying this to the Lagrangian multipliers function of Example \ref{exa00}.5, we arrive at interesting applications (c.f. \cite[Theorem 1]{sottile} and \cite[Corollary 5.11]{dhost}).
\end{rem}

\subsection{BKK for NUC complete intersections} Given a cancellable complete intersection $f_1=\cdots=f_k=0$ in the torus $T\simeq\CC^n$ with the character lattice $M\simeq\Z^n$, its geometry is obviously not determined by the Newton polytopes of its equations. We first explain what other combinatorial data we need, and then, in terms of this additional data, express some geometry of the complete intersection (specializing to the above Theorem \ref{th0cci}, among others).

\begin{defin}\label{defincr}
A lattice polytope $A_j$ in $M$ is called the $j$-th {\it incremental polytope} of $f=0$, if, for every $l\in M^*$ such that the cancelled complete intersection (Definition \ref{defnuc}) $\{f_{l,1}=\cdots=f_{l,j-1}=0\}$ is not empty, the $l$-degree $\deg_l f_{l,1}+\cdots+\deg_l f_{l,j}$ equals $\max l(A_j)$.
\end{defin}
For instance, in Example 1.3.6 
in \cite{schon}, the polytope on the right of the picture is the incremental polytope of the engineered NUC complete intersection.

\begin{theor}[proved in Section \ref{scancel}]\label{thcatchup} 
1. If a complete intersection $S:=\{f_1=\cdots=f_k=0\}$ has incremental polytopes $A_j$, then its tropical fan equals $[A_1]\cdot[A_2- A_1]\cdot\ldots\cdot[A_k-A_{k-1}].$

2. If moreover $S$ is NUC, then  it has isolated singularities, and its Euler characteristics is
$$\prod_{i=1}^k\frac{A_i-A_{i-1}}{1-A_i+A_{i-1}}+(-1)^{n-k}(\mbox{the sum of the Milnor numbers of the singularities, if any}).$$

3. If, moreover, each $A_i$ can be shifted into the interior of $A_{i+1}$, and $k<n$, then $S$ is connected (and, moreover, its first $n-k$ Betti numbers equal those of $T$).

4. In particular, in this case $S$ is irreducible, unless it is a singular curve.

5. The closure of $S$ in the $A_k$-toric variety is Calabi--Yau, if $S$ is smooth and $A_k$ is reflexive and compatible with the fan of part (1) in the following sense.
\end{theor}
\begin{defin}\label{defunicorn}
A polytope $P$ is said to be {\it compatible} with an $m$-dimensional fan $F$, if $F$ is contained in the $m$-skeleton of the dual fan $[P]$ (and thus equals a union of its cones). 
\end{defin}
This follows from Theorem \ref{thcatchup1}, which answers the same questions for arbitrary NUC complete intersections (not assuming the existence of incremental polytopes). 

\begin{rem}\label{remlikely}
1. Furthermore, if we evaluate in the ring of tropical fans the expression $$\prod_{i=1}^k\frac{[A_i]-[A_{i-1}]}{1-[A_i]+[A_{i-1}]},$$ then, for a smooth NUC $S\subset T$, we get the full tropical characteristic class of $S$ (in the sense of \cite{e13}), with its lowest and highest components recovering Parts 1 and 2 of the theorem. 

In particular, descending (as in \cite{fs}) from tropical fans to cohomology of a toric variety $X_\Sigma\supset T$ for a sufficiently fine fan $\Sigma$, the tropical characteristic class $\langle S\rangle$ evaluates to the CSM class of the closure of $S$ in $X_\Sigma$. (A sufficient fineness condition on $\Sigma$ is that the tropical fan  representing each component of the characteristic class $\langle S\rangle$ is a union of cones of $\Sigma$.)

2. The proof of Theorems \ref{thcatchup1} and, in turn, \ref{thcatchup}, refers to the Morse theoretic technique of \cite{schon}. It would be important to find a technique that works over arbitrary fields.
\end{rem}

\subsection{Structure of the paper}

The classical BKK toolkit is summarized in Section \ref{sbkk}. Regularity of complete intersections (Sections \ref{fdiscr}\&\ref{sstransv}) and mixed volumes (Section \ref{slocmv}) are given more attention, in order to cover what we later need for NUC complete intersections.

Section \ref{sincr} is a detailed study of two lattice polytopes that later turn out to be the incremental polytopes of two simplest cancellable complete intersections: $f=\partial f/\partial x_1=0$ and $f(x_1,x_2,x_3,\ldots)=f(x_2,x_1,x_3,\ldots)=0$.

In Section \ref{scci}, we study engineered complete intersections. Along the way, we digress to study NUC complete intersections in general: we describe their tropicalization in Proposition \ref{tropcancel}, and then, in its terms, other geometric properties of a NUC complete intersection (Theorem \ref{thcatchup1}). Both of these steps specialize to engineered complete intersections (Proposition \ref{remmainengin}), as soon as we prove that they are NUC (Theorem \ref{thccinondegexplic}). The same is then done for another class of NUC complete intersections, the symmetric ones (which are not engineered), see Section \ref{ssci}.

All of these results on NUC complete intersections are derived from \cite{schon}, because being NUC implies being sch\"on in the sense of \cite{schon}. This key fact is proved in Section \ref{scancel}.

The formula for the number of critical points of a generic polynomial $f\in\C^A$ (Theorem \ref{thconcr2}, which may be familiar to experts) is given a detailed proof in the appendix (Section \ref{proofth3}).

Not to get lost: 
we study four classes of complete intersections in $\CC^n$: sch\"on, Newtonian, nondegenerate upon cancellaions (NUC), and engineered. They are related as follows:

\vspace{1ex}

\begin{center}

\includegraphics[scale=0.45]{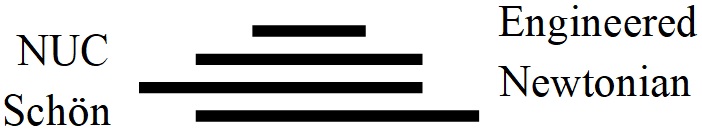}

\end{center}

\section{The BKK toolkit}\label{sbkk}

\subsection{Homogeneity}
When we say that something about polynomials or sets in an algebraic torus is valid by homogeneity considerations, we mean the following.
\begin{defin}\label{dhomog}
1) We say that a collection of subsets $V_i\subset T$ and Laurent polynomials $g_j:T\to\C$ is {\it $k$-homogeneous}, if one can choose coordinates $T\simeq\CC^n$ so that the subsets $V_i\subset\CC^n$ and hypersurfaces $\{g_j=0\}\subset\CC^n$ are preimages of certain $\tilde V_i\subset\CC^k$ and $\{\tilde g_j=0\}\subset\CC^k$ under the coordinate projection $\CC^n\to\CC^k$. The latters are called the {\it dehomogenization} of the initial collection.

2) A statement is valid {\it by homogeneity} for a collection of $V_i\subset T$ and $g_j:T\to\C$, if 

-- this collection is $k$-homogeneous; 

-- the sought statement reduces to the same for the dehomogenization $(\tilde V_i, \tilde g_j)$;

-- the statement for the dehomogenization is known.
\end{defin}
\begin{exa}
A Laurent polynomial is $k$-homogeneous, if all of its monomials belong to the same $k$-dimensional affine subspace of the character lattice.

\end{exa}

\subsection{Regularity} Denote by $\sing (f)$ the singular locus $\{x\,|\, f(x)=0,\, df(x)=0\}$ of a function $f$.
\begin{lemma}\label{fdiscr}
Let $f_i$ be finitely many regular functions on a smooth algebraic variety $M$, then the singular locus of their generic linear combination is contained in $(\cap_i\sing (f_i))\cup($the positive dimensional part of $\cap_i\{f_i=0\})$.
\end{lemma}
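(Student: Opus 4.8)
The plan is to reduce the statement to a Bertini-type transversality argument applied to the universal family of linear combinations. Write the generic linear combination as $f_t = \sum_i t_i f_i$ with parameters $t = (t_1,\dots,t_r) \in \C^r$, and consider the incidence variety $Z = \{(x,t) \mid f_t(x) = 0,\ d_x f_t(x) = 0\} \subset M \times \C^r$. A point $x_0 \in \sing(f_t)$ for generic $t$ means $x_0$ lies in the image of the projection $\pi \colon Z \to \C^r$ over a dense set of $t$, so the fiber dimension of $\pi$ must be full, i.e. the generic fiber of $\pi$ has dimension $\geq \dim Z - r$. The whole point is to bound $\dim Z$: I will show that away from the locus $(\cap_i \sing f_i) \cup (\text{positive-dimensional part of } \cap_i \{f_i = 0\})$, the other projection $\rho \colon Z \to M$ has small enough fibers that $\pi$ cannot be dominant with positive-dimensional generic fiber, forcing $\sing(f_t)$ to sit inside the claimed set.

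First I would fix a point $x_0 \in M$ not in the exceptional set and analyze the fiber $\rho^{-1}(x_0)$, which consists of those $t$ with $\sum_i t_i f_i(x_0) = 0$ and $\sum_i t_i d f_i(x_0) = 0$. If $x_0 \notin \cap_i \{f_i = 0\}$, then some $f_i(x_0) \neq 0$, so the first equation alone cuts the parameter space down by one; more importantly, if $x_0$ lies in the smooth, at-most-finite part of $\cap_i\{f_i=0\}$ but $x_0 \notin \cap_i \sing f_i$, then some $f_{i_0}$ is regular at a point of $\{f_{i_0}=0\}$ near $x_0$ — here I would use regularity of the individual $f_i$ (as in Lemma \ref{fdiscr}'s hypothesis) to see that the linear conditions on $t$ coming from $f_i(x_0)=0$ and $df_i(x_0)=0$ are non-redundant enough that $\rho^{-1}(x_0)$ has codimension $\geq \dim M + 1$ in $\C^r$, or equivalently $\dim \rho^{-1}(x_0) \leq r - \dim M - 1$ for $x_0$ outside the exceptional set. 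Summing over the (at most $\dim M$-dimensional, minus the bad locus) base then gives $\dim(Z \setminus \rho^{-1}(\text{exceptional set})) \leq r - 1$, so its image under $\pi$ is a proper subvariety of $\C^r$: for $t$ outside that subvariety, $\sing(f_t)$ meets no good point, i.e. $\sing(f_t)$ is contained in the exceptional set.

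The step I expect to be the main obstacle is the fiber-dimension count for $\rho$ at a point $x_0$ on the \emph{positive}-codimension-but-still-present part of $\cap_i\{f_i = 0\}$ — precisely, controlling how the conditions "$f_i(x_0) = 0$ for all $i$" and "$\sum t_i df_i(x_0) = 0$" interact when several $f_i$ simultaneously vanish at $x_0$ but the intersection is zero-dimensional there. One has to argue that regularity of each $f_i$ (so $df_i \neq 0$ wherever $f_i = 0$) together with $x_0$ being an isolated point of $\cap_i\{f_i=0\}$ forces the differentials $df_i(x_0)$, restricted to the subspace of $t$ with $\sum t_i f_i(x_0)=0$, to span enough of $T^*_{x_0}M$; this is essentially a Jacobian-rank statement and is where I would need to be careful, possibly stratifying $\cap_i\{f_i=0\}$ and treating its zero-dimensional stratum separately. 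Once that local estimate is in place, the global dimension count and the Bertini conclusion are routine.
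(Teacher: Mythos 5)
There is a genuine gap in the fiber-dimension bound, and the point where the argument breaks is not where you expect it: the trouble occurs at \emph{generic} good $x_0$, not at isolated points of $\cap_i\{f_i=0\}$.

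Your proposed estimate $\dim\rho^{-1}(x_0)\leqslant r-\dim M-1$ is false in general, because the $\dim M$ linear conditions on $t$ coming from $\sum_i t_i\,df_i(x_0)=0$ may have rank far below $\dim M$. Take $M=\C^3$ with coordinates $(x,y,z)$ and $f_1=x$, $f_2=x^2$, $f_3=x^3$ (so $r=3$, and the positive-dimensional part of $\cap_i\{f_i=0\}$ is the whole plane $\{x=0\}$, so the lemma claims $\sing(\sum t_if_i)\subset\{x=0\}$ for generic $t$). For a good point $x_0=(a,b,c)$ with $a\ne 0$, the fiber $\rho^{-1}(x_0)$ is cut out by the two conditions $t_1a+t_2a^2+t_3a^3=0$ and $t_1+2t_2a+3t_3a^2=0$: the differentials $df_i(x_0)$ all lie on the line $\C\cdot dx$, so $\sum t_i\,df_i(x_0)=0$ has rank $1$, not $3$. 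The fiber has dimension $1$, whereas $r-\dim M-1=-1$. Consequently $\dim Z_{\text{good}}=4>r-1=2$, and the naive estimate $\dim\pi(Z_{\text{good}})\leqslant\dim Z_{\text{good}}$ no longer shows $\pi(Z_{\text{good}})$ is proper. (The lemma is still true here: $\pi(Z_{\text{good}})$ turns out to be the discriminant curve $\{t_2^2=4t_1t_3\}$, which is proper, but only because $\pi$ contracts the $(y,z)$-directions fiberwise. A bare dimension count cannot see this contraction.) You can make the drop in rank worse by using $f_i$ that depend on even fewer variables relative to $\dim M$; this is exactly the ``homogeneity'' situation that recurs throughout the paper.

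The paper's proof sidesteps this entirely by choosing a different incidence variety: instead of building the critical-locus condition $df_t(x)=0$ into $Z$ (which makes $Z$ singular and makes its dimension hard to control), it takes the smooth hypersurface $\{(c,y)\mid\sum_ic_if_i(y)=0\}\subset\CP^N\times M_0$, where $M_0$ is the complement of $\cap_i\{f_i=0\}$, and projects to $\CP^N$. The submersion condition that makes this variety smooth is exactly ``not all $f_i(y)$ vanish'', which is what $y\in M_0$ provides. Then the Thom transversality lemma says regular values $c$ give smooth zero loci, and the Bertini--Sard theorem says the non-regular values form a set with proper algebraic closure. Sard's theorem automatically accounts for any contraction of fibers, which is precisely the phenomenon that defeats the dimension count. (The finitely many isolated points of $\cap_i\{f_i=0\}$ outside $\cap_i\sing f_i$ are then handled separately, each imposing a nontrivial linear condition on $c\in\CP^N$ via $\sum_ic_i\,df_i(y_j)=0$.) If you want to repair your argument rather than switch to the paper's, you would need to replace the pointwise fiber bound with an analysis of how $\pi$ contracts $Z_{\text{good}}$, and at that point you are essentially re-proving Sard; it is cleaner to invoke it.
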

\begin{rem}
1. One cannot omit the positive dimensional part of $\cap_i\{f_i=0\}$ in this statement: consider e.g. $f_1(x,y)=xy$ and $f_2(x,y)=x$ where $\sing (f_2)=\emptyset$ and $\cap_i\{f_i=0\})=\{x=0\}$. Observe in this case that the inclusion $\sing (af_1+bf_2)\subset \{x=0\}$ is proper.

2. In the space of all linear combinations of $f_i$'s, the set of linear combinations with a larger singular locus is known as the $\{f_\bullet\}$-discriminant (and especially $A$-discriminant, if $A\subset\Z^n$ is a finite set of monomial functions on $M=\CC^n$).

3. Applying the lemma to a collection $A\subset\Z^n$ of monomials on a torus $\CC^n$, we conclude that a generic Laurent polynomial $f\in\C^A$ defines a regular hypersurface $f=0$ in $\CC^n$.
\end{rem}
{\it Proof.} A linear combination $\sum_{i=0}^N c_i f_i$ fails to satisfy the sought condition, if it has a singular point 

(i) at one of the finitely many isolated points $y_j$ of the zero locus $f_0=\cdots=f_N=0$ outside of $\cap_i\sing f_i$, or

(ii) in the complement $M_0\subset M$ to this zero locus.

The tuples $$c=(c_1:\cdots:c_N)$$ producing a singular point at $y_j$ form a projective subspace $H_j\subset\CP^N$ given by the system of linear equations $\sum_i c_idf_i(y_j)=0$ (which is non-trivial since $y_j\notin\cap_i\sing f_i$).

To characterize the tuples $c$ producing a singular point in $M_0$, consider the projection $\pi$ of the smooth variety $$\{(c,y)\,|\,\sum_ic_if_i(y)=0\}\subset\CP^N\times M_0$$ to the first multiplier $\CP^N$. By the Thom transversality lemma, if $c$ is a regular value of $\pi$, then the zero locus of the linear combination $\sum_ic_if_i(y)$ is smooth in $M_0$. By the Bertini--Sard theorem, the singular values of $\pi$ form a set, whose closure $D$ is proper algebraic.

Thus, for all tuples $c$ outside the proper algebraic subset $D\cup\bigcup_j H_j\subset\CP^N$, the linear combination $\sum_ic_if_i(y)$ satisfies the sought condition.
\hfill$\square$

\subsection{Regular complete intersections}\label{sstransv}
\begin{defin}\label{defdegroot}
Let $g_i$ be smooth functions (or, more generally, sections of line bundles) on a smooth manifold $M$.
We say that $x\in M$ is a degenerate root of the system of equations $g_1=\cdots=g_m=0$, if $g_i(x)=0$, and the differentials $dg_i(x)$ are linearly dependent.

The system of equations is said to be regular on $M$ if none of its roots (if any) is degenerate.
\end{defin}
\begin{rem}\label{remreg0}
1) Regularity implies that the roots of the system form a smooth codimension $m$ manifold (but not vice versa).

2) if $m>\dim M$, then every root is degenerate, so regularity means there are no roots.
\end{rem}

We mostly need the following special case. Let $V$ and $W$ be algebraic subsets in the torus $T\simeq\CC^n$ such that $V$ is smooth outside of $W$, and the dimension of $W$ is at most $m$ (at every its point). 
\begin{defin}
The system of $k$ equations $f=0$ is said to be regular on $(V,W)$, if it defines a set of dimension at most $m-k$ in $W$, and is regular on $V\setminus W$. 
\end{defin}
Given finite sets ${\mathcal A}=(A_1,\ldots,A_k)$ in the character lattice $M$ of the torus $T$, the following is well known.
\begin{theor} \label{bkktransv}
Being regular on $(V,W)$ is a generic property for $f\in\C^{\mathcal A}$. 
\end{theor}
We shall prove below a Proposition \ref{propregind}, which is stronger and applies to engineered complete intersections as well.
\begin{rem}[\cite{khovcomp}]\label{remreg}
1. In particular, for $V=T$, the theorem implies that $f=0$ is a smooth codimension $k$ submanifold of the torus $T$.

2. Note that, if $k>\dim V$ (or just $k>n$ in the case $V=T$), then $f=0$ does not intersect $V$ at all (or is just empty in the case $V=T$) by Remark \ref{remreg0}.2.

3. Slightly more generally, the regularity of $f=0$ in the torus $T$ implies its emptiness, if all $A_i$'s can be shifted in $M$ to the same sublattice of dimension smaller than $k$. This follows from (2) by homogeneity.

\end{rem}

\begin{defin}
In the setting of Theorem \ref{bkktransv}, a vector subspace $L\subset \C^{\mathcal A}=\C^{A_1}\oplus\cdots\oplus\C^{A_k}$ is said to be regular for $(V,W)$, if generic $f\in L$ is regular on $(V,W)$.
\end{defin}
\begin{utver}\label{propregind}
If the projection $L'\subset \C^{A_1}\oplus\cdots\oplus\C^{A_{k-1}}$ of a vector subspace $L\subset\C^{\mathcal A}$ is regular for $(V,W)$, and $(0,\ldots,0,x^a)\in L$ for some $a\in A_k$, then $L$ is regular for $(V,W)$.
\end{utver}

{\it Proof of Proposition \ref{propregind}.}
Our assumption implies that there exists an algebraic hypersurface $D\subset L'$ such that the system of equations $f_1=\cdots=f_{k-1}=0$ is regular on $(V,W)$ once $(f_1,\ldots,f_{k-1})\in L'\setminus D$.

Decomposing $f_k$ into $\hat f_k-cx^a,\hat f_k\in\C^{A_k\setminus\{a\}}$, it is enough to check that, for given $(f_1,\ldots,f_{k-1})\in L'\setminus D$ and any $\hat f_k$, 
the system of equations $f=0$ is regular on $(V,W)$ for all but finitely many $c\in\C$. We now show that yes: such exceptional $c$ are finitely many.

Regularity of $f=0$ on $W$: the set $W'=W\cap\{f_1=\cdots=f_{k-1}=0\}$ has codimension $k-1$ in $W$, so the zero locus of $f$ has codimension $k$ in $W$ unless $\hat f_k/x^a=c$ identically on some irreducible component of $W'$, i.e. unless $c$ is one of the finitely many constants which the function $\hat f_k/x^a$ may identically equal on one of the finitley many components of $W'$.

Regularity of $f=0$ on $V$: the system of equations $f_1=\cdots=f_{k-1}=0$ is regular on $V\setminus W$, thus defining a smooth submanifold $V'$ in it; $f=0$ is regular on $V\setminus W$ once $f_k=0$ is regular on $V'$, i.e. unless $c$ is one of the finitely many critical values of the function $\hat f_k/x^a$ on $V'$.\hfill$\square$

\vspace{1ex}

{\it Proof of Theorem \ref{bkktransv}.}  This theorem states that the space $L=\C^{\mathcal A}$ is regular on $(V,W)$. Since such $L$ satisfies the assumption of Proposition \ref{propregind} by induction on $k$, Theorem \ref{bkktransv} follows.\hfill$\square$

\subsection{Mixed volume} \begin{defin}\label{defpolyh} The {\it lattice mixed volume} $\MV$ is the (unique) real-valued function of $n$ convex sets in $\R^n$, which is 
i) symmetric in its arguments, 

ii) linear in each argument w.r.t the Minkowski summation $B_1+B_2:=\{b_1+b_2\,|\,b_i\in B_i\}$,

iii) has the diagonal value $\MV(B,\ldots,B)$ equal to the lattice volume $\Vol_\Z (B):=n!\Vol (B)$. \end{defin}
\begin{exa}\label{examv2dim} The mixed volume is the polarization of the volume regarded as a degree $n$ polynomial function on the space of polytopes. For instance, if $n=2$, we have
$$\MV(B_1,B_2)=\Vol(B_1+B_2)-\Vol(B_1)-\Vol(B_2).$$
\end{exa}
\begin{rem}\label{remmv}
1. The multiplier $n!$ assures that the mixed volume of lattice polytopes (i.e. polytopes whose vertices belong to $\Z^n$) is integer. When we refer to mixed volumes of subsets of $\Z^n$, we imply mixed volumes of their convex hulls in $\R^n$.

2. More generally, for any integer lattice $M\simeq\Z^n$ and $n$ convex sets in its ambient vector space $\R\otimes M$, their $M$-lattice mixed volume is defined as the mixed volume of their images under an induced isomorphism $\R\otimes M\xrightarrow{\sim}\R\otimes\Z^n=\R^n$ (the value does not depend on the choice of the identification).

3. More generally yet, if $k$ convex sets can be shifted to the same $k$-dimensional rational subspace $K\subset \R\otimes M$, their $k$-dimensional lattice mixed volume can be defined as the $K\cap M$-lattice mixed volume of their shifted copies in $K$. The value does not depend on the choice of the shifts (and is 0 independently of $K$, if $K$ is not unique, i.e. if the sets can be shifted to a less than $k$ dimensional plane). 
\end{rem}
We refer to \cite{ew} for a detailed introduction to mixed volumes in the presence of lattice.

\subsection{Geometry of nondegenerate complete intersections}

Given a Laurent polynomial $f$ on a torus $T\simeq\CC^n$, recall that its support set in the character lattice $M\simeq\Z^n$ is the set of monomials, participating in $f$ with non-zero coefficients, and its Newton polytope in $M\otimes\R\simeq\R^n$ is the convex hull of the support set.

If a system of equations $f_1=\cdots=f_k=0$ is nondegenerate (in the sense of Definition \ref{defnondeg000}), then the topology of its zero locus $\{f_1=\cdots=f_k=0\}$ depends only on its Newton polytopes (see e.g. Corollary 2.16 
in \cite{schon}). 
We recall how simplest topological characteristics of this set express in terms of the Newton polytopes.

\begin{theor}[\cite{kh75}] \label{bkk1}

Assume $f=(f_1,\ldots,f_k)$ is nondegenerate, and $A_i\subset M$ is the support set of $f_i$.
The Euler characteristics of the complete intersection $\{f=0\}$ 
equals $$e_n(A_1,\ldots,A_k):=(-1)^{n-k}\sum_{d_1+\ldots+d_k=n\atop d_1,\ldots,d_k>0} A_1^{d_1}\cdots A_k^{d_k},$$ 
where the product of $n$ finite subsets denotes the lattice mixed volume of their convex hulls.

\end{theor}

This fact can be seen as a special case of Theorem \ref{thcatchup} (see Example 1.10.1 
in \cite{schon} for details), and generalizes the classical expression for the Euler characteristics of a smooth projective complete intersection. However, in contrast to projective complete intersections, even for a generic $f\in \C^{\mathcal A}$ and $k<n$, the set $\{f=0\}$ may consist of more than one component. 
\begin{exa}\label{exared0}
If the first $q$ of the sets $A_1,\ldots,A_k$ are contained in the first $q$-dimensional coordinate subspace of $\Z^n$, then the equations $f_1,\ldots,f_q$ depend only on the first $q$ of the standard coordinates $x_1,\ldots,x_n$, and in general position they have exactly $A_1\cdots A_q$ common roots $(x_1^j,\ldots,x_q^j)$ in $\CC^q$ (Example \ref{exabkk00}). 
As a consequence, the set $\{f=0\}$ splits into $A_1\cdots A_q$ subsets, each contained in a torus coset given by the equations $(x_1,\ldots,x_q)=(x_1^j,\ldots,x_q^j)$.

Each torus coset is isomorphic to $\CC^{n-q}$, and the corresponding part of $\{f=0\}$ is defined in $\CC^{n-q}$ by the equations $$f_i(x_1^j,\ldots,x_q^j,x_{q+1},\ldots,x_n)=0,\, i=q+1,\ldots,k. \eqno{(**_j)}$$The support sets of these equations $B_i\subset\Z^{n-q}$ are the projections of $A_i$'s along the first coordinate plane $\Z^q\subset\Z^n$.

Note that the BKK toolkit can be used to study the zero locus of the equations $(**_j)$: the multivalued map $F:\C^{\mathcal A}\to\C^{\mathcal B}$, sending $f$ to $(**_j)$ for all $j$, is dominant, so the general position of  the initial tuple $f$ in $\C^{\mathcal A}$ ensures the general position of the tuple $(**_j)$ in $\C^B:=\bigoplus_i\C^{B_i}$  (in particular nondegeneracy).

\end{exa}

We now repeat this example avoiding the use of coordinates. For this, we shall refer to a {\it saturated} sublattice $L\subset M\simeq\Z^n$ 
(i.e. an intersection of $M$ with a vector subspace in $M\otimes\Q\simeq\Q^n$)
as a {\it subspace} of $M$, its coset of the form $L+m=\{l+m\,|\, l\in L\},\,m\in M$, as an {\it affine subspace}, a connected algebraic subgroup $G$ of the tours $T\simeq \CC^n$ as a {\it subtorus}, and its coset of the form $G\cdot x=\{g\cdot x\,|\,g\in G\},\, x\in T$, as a {\it shifted subtorus}.

\begin{exa}\label{exared}
If the first $q$ of the set $A_1,\ldots,A_q$ are contained in affine subspaces of the lattice $M$ parallel to the same $q$-dimensional subspace $L\subset M$, then the set $\{f=0\}$ for generic $f\in\C^{\mathcal A}$ splits into $A_1\cdots A_q$ subsets (this mixed volume is understood in the sense of Remark \ref{remmv}.3), each subset contained in a shifted copy of a certain subtorus of $T$. (More specifically, this is the subtorus defined by the equations $m(x)=1$ for all monomials $m\in L$.)

For generic $f\in\C^{\mathcal A}$, each subset of the complete intersection $\{f=0\}$ is itself a complete intersection defined in its shifted subtorus by generic equations supported at certain sets $B_j\subset M/L$. (More specifically, the set $B_j$ is the image of $A_j,\, j=q+1,\ldots,k$, under the projection $M\to M/L$.)
This example can be reduced to the preceding one by choosing an isomorphism $T\simeq\CC^n$ so that the corresponding isomorphism $M\simeq\Z^n$ sends $L$ to the first coordinate plane.
\end{exa}
We need the coordinate free form for this example in order to make an important statement: this is actually the only example when a generic complete intersection $\{f=0\}$ is reducible.
Morse specifically, once we choose maximal possible $q$ for given supports $A_1,\ldots,A_k$ in the setting of this example, each of the described pieces of the set $\{f=0\}$ is nonempty and irreducible, so $\{f=0\}$ has exactly $A_1\cdots A_q$ components. This was proved by Khovanskii:
\begin{theor}[\cite{kh15}]\label{bkk3}
A smooth complete intersection defined by nondegenerate equations with support sets $A_1,\ldots,A_k\subset M$ is irreducible (and hence connected), unless $q<n$ of the sets $A_j$ can be shifted to the same $q$-dimensional subspace of $M$, and their lattice mixed volume in this subspace exceeds 1.

\end{theor}
\begin{rem}
1. If none of $A_i$'s belongs to a hyperplane, then this theorem follows from Theorem \ref{thcatchup}, because, in its notation, $N_i\supset A_i$.

2. Tuples of lattice polytopes having mixed volume at most 1 are classified in \cite{eg11}. 
\end{rem}

\begin{rem}\label{remmvprod}
If $k=n$ in the setting of Example \ref{exared0}, then, applying the BKK formula 
to the initial system of equations $f_1=\cdots=f_n=0$, its ``square'' subsystem $f_1=\cdots=f_q=0$, and its ``quotient'' system $(**_j)$, we have the chain of equalities: $\MV(A_1,\ldots,A_n)=($the number of solutions of $f_1=\cdots=f_n=0)=\sum_j($the number of solutions of $(**_j))=($the number of solutions of $f_1=\cdots=f_q=0)\cdot($the number of solutions of $(**_1))=\MV(A_1,\ldots,A_q)\cdot\MV(B_{q+1},\ldots,B_n)$.

Thus, in the setting of Example \ref{exared} with $k=n$, we have the same important formula for the mixed volume: $$\MV(A_1,\ldots,A_n)=\MV(A_1,\ldots,A_q)\cdot\MV(B_{q+1},\ldots,B_n).$$
\end{rem}

\subsection{Localizing mixed volumes}\label{slocmv} Here we collect two formulas for mixed volumes, which we were unable to find in the literature (though they will hardly surprise the experts). 

\begin{utver}\label{locmv}
For convex bodies $A_1,\ldots,A_n,B_1,\ldots,B_n\subset\R^n$, let $\lambda\in\Lambda$ be the connected components of the set of linear functions $l\in(\R^n)^*$ at which the support functions differ:
$$(A_1(l),\ldots,A_n(l))\ne(B_1(l),\ldots,B_n(l)).$$Denoting by $B^\lambda_i$ the polytope whose support function equals
$B_i(l)$ for $l\in \lambda$ and $A_i(l)$ otherwise,
$$A_1\cdots A_n-B_1\cdots B_n=\sum_{\lambda\in\Lambda}A_1\cdots A_n-B^\lambda_1\cdots B^\lambda_n.$$
\end{utver}
Approximating the convex bodies with rational polytopes, the statement reduces the lattice polyhedral case. In this generality, the equality is established by computing the mixed volumes on both sides with the height$\times$base formula (see e.g. Theorem 4.10 in \cite{ew}). The polytopes $B^\lambda_i$ not always exist, but in many important cases they do (see e.g. the next section). 

\vspace{1ex}

For the second identity, let $A\subset M\simeq\Z^n$ be a finite set.

\begin{defin}\label{defpolylink}

1. Refer to $\dim\conv A$ as $\dim A$, 
and to a face of $\conv A$ intersected with $A$ -- as a face of $A$.
For a subset $B$ and a $k$-dimensional face $Q$ in $A$, let $p:M\to\Z^{n-k}$ be an affine surjection sending $B$ to 0. Define the {\it link cone} $[A/Q]:=\R_+\cdot\conv pA\subset\R^{n-k}$ and the (non-convex) {\it link polytope} $[A-B/Q]:=[A/Q]\setminus (pB+[A/Q]).$

When we want to refer to it in terms of a complementary set $R$ such that $B=A\setminus R$, instead of $B$ itself, we write $[A\cap R/Q]$.

\vspace{1ex}

2. Finite sets $A$ and $B\subset M$ are said to be compatible, if their dual fans coincide, or, equivalently, if $A, B$ and $A+B$ have the same number of faces.

If they do, there is a natural 1-to-1 correspondence between their faces, such that the Minkowski sum of the corresponding faces of $A$ and $B$ is the corresponding face of $A+B$. The face of $B$ corresponding to the face $E$ of $A$ is denoted by $B^E$.
\end{defin}

\begin{exa}
Denoting the $z$-coordinate axis by $Z$, the hatched polygons on the picture are $[A_1\cap Z/A_1\cap Z]$ and $[A_2\cap Z/A_2\cap Z]$.
\begin{center}

\includegraphics[scale=0.8]{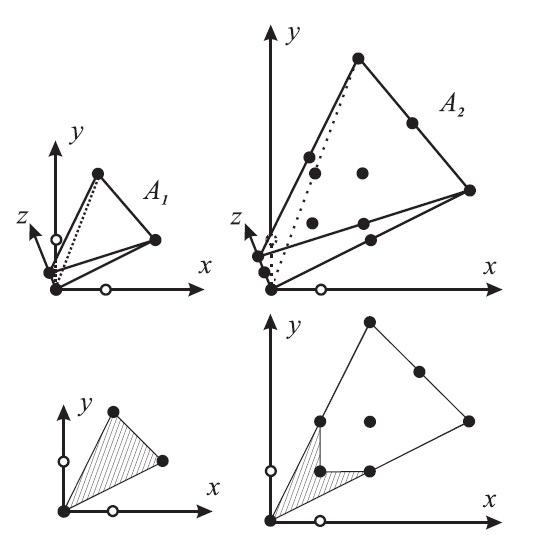}

\end{center}
 \end{exa}

\begin{utver}\label{locvol}
1. For compatible $A_0,\ldots,A_k\subset M$, choose $A\subset A_0$ so that $A_0\setminus A$ contains at most $k$-dimensional faces of $A_0$, and denote the set of such $k$-dimensional faces by $\mathcal{A}$. Then, defining $A_Q:=((\mbox{vertices of }A_0)\setminus Q)\cup A$, we have $$A_0^{n-k}A_1\cdots A_k-A^{n-k}A_1\cdots A_k=\sum_{Q\in\mathcal{A}} A_0^{n-k}A_1\cdots A_k-A_Q^{n-k}A_1\cdots A_k=\sum_{Q\in\mathcal{A}} A_0^{n-k}A_1^Q\cdots A_k^Q-A_Q^{n-k}A_1^Q\cdots A_k^Q.$$ 

2. Each difference on the right hand side
equals $(A_1^Q\cdots A_k^Q) \cdot \Vol_\Z [A_0-A/Q]$.
\end{utver}
Part 1 is established by computing the mixed volumes on both sides with the height$\times$base formula (see e.g. Theorem 4.10 in \cite{ew}). Part 2 is the formula of Remark \ref{remmvprod}, applied to the mixed volumes $A_\bullet^{n-k}A_1^Q\cdots A_k^Q$ on the right hand side of Part 1.

\vspace{1ex}

We now give a common generalization of Propositions \ref{locmv} and \ref{locvol}.1: it explains their similarity, reduces apparent serendipity in defining $B_i^\lambda$ and $A_Q$, allows to avoid a bothersome application of the height$\times$base formula, and covers further interesting examples. It applies to continuous piecewise homogeneous polynomial functions $m$ and $\tilde m$ of degree $d$ on a $d$-dimensional tropical fan $F$ (with $F=\R^d$ and important special case).

\begin{theor}\label{locdelta}
Let $\lambda\in\Lambda$ be the connected components of the set of $l\in F$ at which $m(l)\ne\tilde m(l)$. Then, defining $M_\lambda(l)$ to be $\tilde m(l)-m(l)$ for $l\in\lambda$, and 0 otherwise, we have
$$\tilde m-m=\sum\nolimits_{\lambda\in\Lambda} M_\lambda,\quad \mbox{ and therefore }\quad \delta^d(\tilde m\cdot F)-\delta^d (m\cdot F)=\sum\nolimits_{\lambda\in\Lambda} \delta^d (M_\lambda\cdot F)\in\Z,$$

where $\delta$ is the corner locus operator as defined in \cite{mmjpoly}.
\end{theor}
This ``theorem'' is tautological, and admits an equally tautological corollary, in case $\tilde m=\Phi_d(\tilde m_1,\ldots,\tilde m_p)$ and $m=\Phi_d(m_1,\ldots,m_p)$, where $\tilde m_i$ and $m_i$ are continuous piecewise linear functions, and $\Phi_d$ is the $d$-homogeneous component of a power series $\Phi\in\R[[x_1,\ldots,x_p]]$.
\begin{sledst}\label{locdeltalin}
Let $\lambda\in\Lambda$ be the connected components of the set of $l\in F$ at which $$\bigl(m_1(l),\ldots,m_p(l)\bigr)\ne\bigl(\tilde m_1(l),\ldots,\tilde m_p(l)\bigr).$$ Then, defining the function $\tilde m_{i,\lambda}(l)$ to be $m_i(l)$ for $l\in\lambda$, and $\tilde m_i(l)$ otherwise, we have
$$\delta^d\Phi(\tilde m_1,\ldots,\tilde m_p)F-\delta^d\Phi(m_1,\ldots,m_p)F=\sum\nolimits_{\lambda\in\Lambda} \delta^d\Phi(\tilde m_1,\ldots,\tilde m_p)F-\delta^d\Phi(\tilde m_{1,\lambda},\ldots,\tilde m_{p,\lambda})F\in\R,$$
where $\delta^d\Phi(\bullet,\ldots,\bullet)F$ stands for $\delta^d(\Phi_d(\bullet,\ldots,\bullet)\cdot F)$.
\end{sledst}
\begin{rem}
This applies to mixed volumes,  because the mixed volume of polytopes $P_1,\ldots,P_n$ can be obtained from the product of the support functions of $P_1,\ldots,P_d$ on the dual tropical fan $F=[P_{d+1}]\cdot\ldots\cdot[P_n]$ by applying the corner locus operator $\frac{\delta^d}{d!}$, see \cite{mmjpoly}. \end{rem}
This remark makes Propositions \ref{locmv} and \ref{locvol}.1 special cases of Corollary \ref{locdeltalin}:

\ref{locmv}) $d=n$, $\tilde m_i$ and $m_i$ are the support functions of the polytopes $A_i$ and $B_i$, and $F=\R^n$;

\ref{locvol}) $d=n-k$, $\tilde m_i$ and $m_i$ are the support functions of $A_0$ and $A$, and $F=[A_1]\cdot\ldots\cdot[A_k]$.

\begin{rem} In the second application, the elements of $\Lambda$ are relatively open maximal cones of the fan $F$. Note that if we instead try to take $d=n$, $F=\R^n$, $m_i(l)=\tilde m_i(l)=A_i(l)$ for $i=1,\ldots,k$, and $m_i(l)=A(l)$ and $\tilde m_i(l)=A_0(l)$ for larger $i$, then $\Lambda$ consists of one element, and Corollary \ref{locdeltalin} becomes useless. That is why we cannot restrict it to the case $F=\R^d$.
\end{rem}

\subsection{Singularities of toric varieties}
Assume that $A$ in the character lattice $M\simeq\Z^n$ of the torus $T\simeq\CC^n$ cannot be shifted to a proper sublattice. 
Then the map $T\to\CP^A$ sending $x\in T$ to the point with the homogeneous coordinates $x^a,\,a\in A$, is injective. The closure of its image $X_A$ is a toric variety, extending the action of $T$ on itself. The orbits of this action are in one to one correspondence with the faces $B\subset A$, and will be denoted by $O_B\subset X_A$.

In general, $X_A$ is singular at the points of $O_B$ for $B\ne A$. Since its singularities at every point of $O_B$ are equivalent, we can study them in terms of the combinatorial data $(A,B)$.

\begin{utver}[Section 1.5 in \cite{dcg}]\label{eulobstr}
1. The multiplicity $c^B_A$ of $X_A$ at a point of a codimension $k$ orbit $O_B$ equals the lattice $k$-volume of the (non-convex) lattice polytope $P_B:=\pi(\conv(A)\setminus\conv(A\setminus B))$, where $\pi:M\to\Z^k$ is an affine surjection satisfying $\pi(B)=0$.

2. (cf. \cite{kiyoshi}) The Euler obstruction $(-1)^{\dim A-\dim B}e^B_A$ of $X_A$ at the same point is defined by the matrix equality $(e^{B'}_B)=(c^{B'}_B)^{-1}$, where $c^B_B:=1$ and $c^{B'}_B=0$ unless $B'\subset B$.

\end{utver}

Note that $P_B$ is the link polytope $[A\cap B/B]$, in terms of Definition \ref{defpolylink}.

\section{Incremental polytopes: two instances}\label{sincr}
For a given set $A\subset\Z^n$, we introduce lattice polytopes $\hat A$ and $\check A$. They will happen to be incremental polytopes for the simplest critical and symmetric complete intersections respectively, though we make no appeal to them in this elementary geometric section.

We recall that: $H_b\subset\Z^n$ is the hyperplane of points whose first coordinate equals $b\in\Z$;

-- the {\it support function} of a set $A\subset\R^n$ is a function $A(\bullet):(\R^n)^*\to\R$, whose value $A(\gamma)$ equals the supremum of the linear function $\gamma:\R^n\to\R$ on $A$;

-- the {\it $\gamma$-support face} is the subset $A^\gamma\subset A$ of points at which $\gamma$ equals its supremum $A(\gamma)$.
\subsection{The critical incremental}
\begin{theor}\label{hatunicorn}
Let $\hat A\subset\R^n$ be the intersection of convex hulls $\hat A_{(b)}$ of the sets $(A+A)\setminus H_b$.

1. This is a lattice polytope, and its support function is the minimum of those of $\hat A_{(b)}$.

2. If $\hat A^\gamma$ is an edge, then the polytopes $\hat A^\gamma$ and $\conv A^\gamma$ can be shifted to the same 2-dimensional sublattice of $\Z^n$, in which their mixed area equals the area of $\conv A^\gamma$.
\end{theor}
\begin{rem} \label{remhat} 1. Part 2 essentially means that $\hat A$ is compatible (Definition \ref{defunicorn}) with the fan $[A]([\hat A]-[A])$, see Corollary \ref{corolcritunic}.

2. Clearly, we have $\hat A^\gamma=\widehat{A^\gamma}$.
\end{rem}
The proof makes use of the following toy version of the removable singularity theorem.
\begin{utver}[{\bf Removable disvexity theorem}]\label{removeconcav} Let $S\subset\R^n$ be a closed polyhedral (or even subanalytic) set of codimension exceeding 1.
If a function $g:\R^n\setminus S\to\R$ is convex near every point of its domain, then it extends to a convex function on the whole $\R^n$.  
\end{utver}
We shall apply this to the following $S$ and $g$.
\begin{lemma} \label{lhat}
Let $S$ be the set of $\gamma$ such that $A^\gamma$ is not contained in a line. Then, near every $\gamma\notin S$, the function $g(\bullet):=\min_b \hat A_{(b)}(\bullet)$ equals $\min(\hat A_{(2p)}(\bullet),\hat A_{(2q)}(\bullet))$ for some $p$ and $q$.
\end{lemma}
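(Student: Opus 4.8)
The plan is to understand the local structure of $g(\gamma) := \min_b \hat A_{(b)}(\gamma)$ near a point $\gamma_0 \notin S$, i.e. a point where the support face $A^{\gamma_0}$ is contained in a line $\ell$. First I would observe that for a generic direction $\gamma$ near $\gamma_0$, the support face $(A+A)^\gamma$ is a single vertex $2v$ with $v$ a vertex of $\conv A$, so that, as long as that vertex does \emph{not} lie on $H_b$, we have $\hat A_{(b)}(\gamma) = (A+A)(\gamma) = 2A(\gamma)$; only the finitely many values of $b$ equal to the first coordinate of a vertex $v$ appearing in $A^{\gamma_0}$ can possibly make $\hat A_{(b)}(\gamma) < 2A(\gamma)$. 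Since $A^{\gamma_0} \subset \ell$ and $\ell$, being a line, meets each hyperplane $H_b$ in at most one point (unless $\ell \subset H_b$, which I would rule out by a genericity/homogeneity reduction, or handle by noting that then $\hat A_{(b)}$ is irrelevant in a neighborhood), the first coordinates of the vertices of $A^{\gamma_0}$ take at most two values; call them $2p$ and $2q$ after doubling (if $A^{\gamma_0}$ is a single vertex, take $p=q$). Hence only $\hat A_{(2p)}$ and $\hat A_{(2q)}$ can drop below $2A(\bullet)$ near $\gamma_0$, and since $2A(\bullet) = (A+A)(\bullet) \geq \hat A_{(b)}(\bullet)$ always (as $(A+A)\setminus H_b \subset A+A$), we get $g = \min(\hat A_{(2p)}, \hat A_{(2q)})$ on a neighborhood of $\gamma_0$.

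The key steps, in order, would be: (i) recall $\hat A_{(b)}(\gamma) = \max\{\gamma(w) : w \in (A+A)\setminus H_b\}$ and that this equals $(A+A)(\gamma) = 2A(\gamma)$ unless every point of $(A+A)$ achieving the max lies in $H_b$; (ii) identify the points of $A+A$ achieving the max in a direction $\gamma$ near $\gamma_0$ — these are sums $v+w$ of vertices $v,w$ of $A^{\gamma_0}$, whose first coordinates therefore lie in $\{2p, p+q, 2q\}$ where $\{p,q\}$ are the two possible first-coordinate values of vertices of $A^{\gamma_0}$; (iii) deduce that for $b \notin \{2p, p+q, 2q\}$ the set $H_b$ misses all maximizers, so $\hat A_{(b)}(\gamma) = 2A(\gamma) \geq g(\gamma)$ and such $b$ are irrelevant to the minimum; (iv) observe $\hat A_{(p+q)}$ is also irrelevant: if $\ell$ is not horizontal then no vertex of $A^{\gamma_0}$ has first coordinate $\tfrac{p+q}{2}$ strictly between $p$ and $q$ unless $p = q$, and the mixed sums $v+w$ with $v \neq w$ are not the unique maximizer near a generic $\gamma_0$ — more carefully, one checks that removing $H_{p+q}$ from $A+A$ does not change the support function in directions near $\gamma_0$ because some pure sum $v+v$ or $w+w$ (not in $H_{p+q}$) is always among the maximizers; (v) conclude $g = \min(\hat A_{(2p)}, \hat A_{(2q)})$ near $\gamma_0$.

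The main obstacle I anticipate is step (iv): ruling out the "middle" hyperplane $H_{p+q}$ cleanly. The subtlety is that for $\gamma$ exactly equal to $\gamma_0$ the whole edge $A^{\gamma_0}$ maximizes, so $(A+A)^{\gamma_0}$ contains the mixed sum $v+w$ lying in $H_{p+q}$; one must use that we only need the identity in a punctured or full neighborhood and that the conclusion is about the function $g$, which is determined by its values on an open set. I would argue that for $\gamma$ near $\gamma_0$ but not on the wall where the edge face appears, the maximizer of $A$ is a single vertex, either the one with first coordinate $p$ or the one with first coordinate $q$, so the maximizer of $A+A$ is the corresponding pure sum, which avoids $H_{p+q}$; thus $\hat A_{(p+q)}(\gamma) = 2A(\gamma)$ off the wall, and by continuity $\hat A_{(p+q)} \geq \min(\hat A_{(2p)}, \hat A_{(2q)})$ on the wall too — actually it suffices that $\hat A_{(p+q)}$ agrees with $2A$ on a dense subset of the neighborhood, hence $\hat A_{(p+q)} \equiv 2A$ there by continuity of support functions, so it never contributes to the minimum. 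A secondary annoyance is the degenerate case where $\ell$ is parallel to $H_0$ (horizontal), so that a whole face of $A$ sits in one $H_b$; here I would invoke the homogeneity/coordinate-change conventions of the paper, or directly note that then all relevant maximizing vertices have the same first coordinate $p = q$ and the claim holds trivially with $\hat A_{(2p)}$ alone.
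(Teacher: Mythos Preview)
Your argument is essentially correct and arrives at the same conclusion as the paper, but the execution differs in a way worth noting. The paper does not work directly with maximizers of $A+A$; instead it introduces $A_{(b)}:=A\setminus H_b$ and $M_{(b)}:=A(\bullet)-A_{(b)}(\bullet)$, and records three identities: $M_{(p)}M_{(q)}\equiv 0$ for $p\ne q$; $\hat A_{(2b)}=2A-M_{(b)}$ and $\hat A_{(2b+1)}=2A$; and hence $g=2A-\max_b M_{(b)}=2A-\sum_b M_{(b)}$. With these in hand, the question reduces to asking when $M_{(b)}(\gamma')>0$, i.e.\ when $A^{\gamma'}\subset H_b$. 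Since $A^{\gamma'}$ for $\gamma'$ near $\gamma$ is a face of the segment $\conv A^\gamma$ (the whole segment or one of its two endpoints, with first coordinates $p$ and $q$), this happens only for $b\in\{p,q\}$. Your ``middle hyperplane'' $H_{p+q}$ never enters, because the paper is testing containment of faces of $A$ in $H_b$, not faces of $A+A$.

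One small imprecision in your write-up: in step (ii) you assert that the first coordinates of the maximizers of $A+A$ near $\gamma_0$ lie in $\{2p,p+q,2q\}$. This fails if $A^{\gamma_0}$ contains lattice points strictly between its endpoints (then $A+A$ has maximizers with other intermediate first coordinates at $\gamma=\gamma_0$). What you actually need---and what your continuity argument in (iv) does establish---is that for every $\gamma'$ near $\gamma_0$ the set of maximizers of $A+A$ contains at least one of $2a_p,2a_q$, so it is never wholly contained in $H_b$ for $b\notin\{2p,2q\}$. That single observation handles $b=p+q$ and all other $b$ uniformly; no separate step (iv) is required. Finally, note that the paper's identities $g=2A-\sum_b M_{(b)}$ and $\hat A_{(2b)}=A+A_{(b)}$ are reused immediately afterwards to prove convexity of $g$ (Theorem~\ref{hatunicorn}.1), which your direct approach does not supply.
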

For this lemma, we look at the sets $A_{(b)}:=A\setminus H_b$ and the functions $M_{(b)}(\bullet):=A(\bullet)-A_{(b)}(\bullet)$:

1) $M_{(p)}M_{(q)}=0$ identically, once $p\ne q$, by definition of $M_{(b)}$;

2) $\hat A_{(2b)}(\bullet)=2A(\bullet)-M_{(b)}(\bullet)$, and $\hat A_{(2b+1)}(\bullet)=2A(\bullet)$, by definition of $\hat A_{(b)}$;

3) $g(\bullet)=2A(\bullet)-\max_b M_{(b)}(\bullet)=2A(\bullet)-\sum_b M_{(b)}(\bullet)$ by (2) and then (1).

\vspace{1ex}

{\it Proof of Lemma.} Let $p$ and $q$ be the the minimal and maximal value of the first coordinate on the set $A^\gamma$.

If $p=q$, then, near $\gamma$, we have $\hat A_{(b)}(\bullet)=2A(\bullet)$ unless $b=2p$. This gives $g(\bullet)=\hat A_{(2p)}(\bullet)$ by (3) and (2).

Otherwise (for $p<q$), the support face $\hat A^\gamma$ belongs to the 1-dimensional convex hull of $2A^\gamma$, and thus near $\gamma$ we have $\hat A_{(b)}(\bullet)=2A(\bullet)$ unless $b=2p$ or $2q$. This gives $g(\bullet)=\min(\hat A_{(2p)}(\bullet),\hat A_{(2q)}(\bullet))$  by (3) and (2). \hfill$\square$

\vspace{1ex}

{\it Proof of Theorem \ref{hatunicorn}.1.} {\bf I.} The function $g$ is locally convex near every $\gamma\notin S$, because $$g(\bullet)=\min(\hat A_{(2p)}(\bullet),\hat A_{(2q)}(\bullet))=2A(\bullet)-\max(M_{(2p)}(\bullet),M_{(2q)}(\bullet))=$$ $$=2A(\bullet)-M_{(2p)}(\bullet)-M_{(2q)}(\bullet)=A_{(p)}(\bullet)+A_{(q)}(\bullet)\mbox{  near } \gamma.$$
Here the equalities follow by the lemma, (2), (1) and the definition of $A_{(b)}$ repsectively.

{\bf II.} Thus, by the removable disvexity theorem, $g$ is a convex cone-wise linear function. Thus it is the support function of $\hat A$. The latter is a lattice polytope, because $g$ is integer valued. \hfill$\square$

\begin{lemma}\label{l2dim}
If $A\subset\Z^n$ is not contained in a line, and the first coordinate takes more than two values on it, then $\hat A$ is not contained in a line. 
\end{lemma}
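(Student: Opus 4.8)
\emph{Plan.} The plan is to produce, inside $\hat A$, a polytope that is visibly at least two-dimensional, and the natural candidate is $\conv D$ where
$$D:=\{\,a+a'\ :\ a,a'\in A,\ a_1\neq a'_1\,\}$$
is the set of ``mixed sums'' of points of $A$ lying in different hyperplanes $H_b$. I would first check that $\conv D\subseteq\hat A$, then show that $D$ is not contained in a line; since $\hat A\supseteq\conv D$ is convex, this gives the claim. Note $D\neq\varnothing$ because the first coordinate is non-constant on $A$.

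\emph{Step 1: $D\subseteq\hat A$.} Recall from Theorem \ref{hatunicorn} that $\hat A=\bigcap_{b\in\Z}\hat A_{(b)}$ with $\hat A_{(b)}=\conv\bigl((A+A)\setminus H_b\bigr)$. Fix $a,a'\in A$ with $a_1\neq a'_1$ and $b\in\Z$. If $b\neq a_1+a'_1$, then $a+a'\in(A+A)\setminus H_b\subseteq\hat A_{(b)}$ outright. If $b=a_1+a'_1$, then, since $a_1\neq a'_1$, both $2a$ and $2a'$ have first coordinate different from $b$, so $2a,2a'\in(A+A)\setminus H_b$, and $a+a'=\tfrac12(2a)+\tfrac12(2a')$ lies in $\conv\{2a,2a'\}\subseteq\hat A_{(b)}$. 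As $b$ was arbitrary, $a+a'\in\hat A$.

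\emph{Step 2: $D$ is not contained in a line.} Suppose, for contradiction, $D\subseteq\ell=x_0+\R d$ for some $d\neq 0$. Since the first coordinate takes at least three values on $A$, pick $p,q,r\in A$ with $p_1,q_1,r_1$ pairwise distinct; then $p+q,\ q+r,\ p+r\in D\subseteq\ell$, so the differences $p-r,\ q-r,\ p-q$ are all parallel to $d$. Hence $p,q,r$ are collinear, lying on a line $L$ of direction $d$, and $\ell=(p+q)+\R d$. Because $A$ is not contained in a line, choose $s\in A\setminus L$. The value $s_1$ coincides with at most one of $p_1,q_1,r_1$, so after relabelling $p,q,r$ we may assume $s_1\notin\{p_1,q_1\}$; then $s+p,\ s+q\in D\subseteq\ell$, whence $s-q=(s+p)-(p+q)\in\R d$ and therefore $s\in q+\R d=L$, a contradiction. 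Thus $D$, and a fortiori $\hat A\supseteq\conv D$, is not contained in a line, proving the lemma.

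\emph{On the difficulty.} There is no analytic obstacle; the only thing to get right is the choice of auxiliary family of points inside $\hat A$. Working directly with the vertices of $\hat A$ or with its support function leads to a cumbersome case analysis, whereas the mixed sums $a+a'$ with $a_1\neq a'_1$ lie in $\hat A$ for an almost trivial reason and make transparent the role of both hypotheses: ``not contained in a line'' supplies the off-line point $s$, while ``at least three values of the first coordinate'' supplies the triple $p,q,r$ whose pairwise sums already force the direction of any line that could contain $D$.
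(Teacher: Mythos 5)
Your proof is correct and takes a genuinely different route from the paper's. The paper fixes a segment $I$ joining points of $A$ with extreme first coordinate and a point $a+v\in A$ off the line through $I$, and argues that each $\hat A_{(b)}$ contains a point of the form $a_2+\varepsilon v$ with $\varepsilon>0$ while $\hat A$ also contains a nondegenerate subsegment of $I+I$, whence $\dim\hat A\geq 2$; this requires some care about which mixed sums survive the removal of $H_b$ and about uniformity of $\varepsilon$ over $b$. You instead exhibit an explicit subset $D=\{a+a' : a,a'\in A,\ a_1\neq a_1'\}\subseteq\hat A$, where containment is proved uniformly by the neat midpoint trick $a+a'=\tfrac12(2a)+\tfrac12(2a')$ when $b=a_1+a_1'$, and then you show $D$ is not collinear by pure linear algebra: if $D$ sat on a line of direction $d$, the pairwise sums of a triple $p,q,r$ with distinct first coordinates would force $p,q,r$ onto a line $L$ of that direction, and any off-$L$ point $s\in A$ would be forced onto $L$ via one of the sums $s+p$ or $s+q$. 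This is cleaner and avoids the paper's small-$\varepsilon$ argument entirely. One small nit: the identity $\hat A=\bigcap_b\hat A_{(b)}$ is the definition of $\hat A$, not an output of Theorem \ref{hatunicorn}, so no appeal to that theorem is needed (and indeed shouldn't be made, since Lemma \ref{l2dim} feeds into the proof of part 2 of that theorem).
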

\begin{proof} Choose a segment $I$ connecting points of $A$ with minimal and maximal value of the first coordinate, and a point $a+v\in A$ for $a\in I$ and non-zero $v\in H_0$. Then the set $(A+A)\setminus (I+I)$ has at least three points of the form $a_i+\varepsilon_i\cdot v, \varepsilon_i>0, a_i\in (I+I)\cap H_{b_i}, b_1<b_2<b_3$.
Thus, for every $b\in\Z$, the polytope $\hat A_{(b)}$ contains the point $a_2+\varepsilon\cdot  v$ for small enough $\varepsilon>0$. Thus $\hat A$ contains both $a_2+\varepsilon\cdot v$ and a subsegment of $I+I$, including $a_2$, thus $\dim\hat A>1$.
\end{proof}

{\it Proof of Theorem \ref{hatunicorn}.2} resembles the proof of lemma. 

{\bf I.} Assume that the first coordinate takes one value $p$ on $A^\gamma$. Then, near $\gamma$, we have $\hat A_{(b)}(\bullet)=2A(\bullet)$ unless $b=2p$. This gives $g(\bullet)=\hat A_{(2p)}(\bullet)=A_{(p)}(\bullet)+A(\bullet)$ by (3) and (2).

Thus $\hat A^\gamma=A_{(p)}^\gamma+\conv A^\gamma$, and $\conv A^\gamma$ is a segment parallel to $\hat A^\gamma$.

{\bf II.}  Assume that the first coordinate takes two values $p$ and $q$ on $A^\gamma$. Then $\hat A^\gamma$ equals $\conv (A\cap H_p)+\conv(A\cap H_q)$. Since the former is a segment, both $\conv (A\cap H_p)$ and $\conv(A\cap H_q)$ are segments parallel to it, i.e. $\conv(A^\gamma)$ is a trapezoid whose bases sum up to $\hat A^\gamma$.

{\bf III.}  Assume that the first coordinate takes more than two values on $A^\gamma$. Then, by Lemma \ref{l2dim}
applied to $\hat A^\gamma$, the set $\conv A^\gamma$ is a segment parallel to $\hat A^\gamma$.

\begin{sledst}\label{corolcritunic} The polytope $\hat A$ is compatible with the fan $[A]([\hat A]-[A])$.
\end{sledst}
\begin{proof}
By definition, we need to prove that the fan $[A][\hat A]-[A][A]$ contains no $\gamma$ from dual cones of vertices or edges of $\hat A$, i.e. such that $\hat A^\gamma$ is a vertex or edge. 

If $\hat A^\gamma$ is a vertex, then $\gamma$ is not in $[\hat A]$, all the more not in $[A][\hat A]-[A][A]$.

If $\hat A^\gamma$ and $\conv A^\gamma$ are parallel segments, then $\gamma$ is not in $[A][\hat A]$ or $[A][A]$.

Otherwise Theorem \ref{hatunicorn}.2 implies that $\gamma$ belongs to the interior of  codimension 2 cones of both $[A][\hat A]$ and $[A][A]$, having the same multiplicity. Then $\gamma$ is not in their difference.
\end{proof}

\subsection{The symmetric incremental}

Let $I:\Z^n\to\Z^n$ permute the first two coordinates, denote the difference of these coordinates by $d:\Z^n\to\Z$, and by $\mathcal{I}$ the segment connecting $(1,0,0,\ldots,0)$ and $(0, 1,0,\ldots,0)$. As always, $A\subset\Z^n$ is a finite set.
\begin{defin}\label{defdenom}
1. For $b\in\Z$, define $\check A_{(b)}:=\conv (A\setminus\{d=b\}+IA)\cup I(A\setminus\{d=b\}+IA)$.

2. Let the {\it denominator} $d_A$ be the GCD of the pairwise differences of the values $d(A)$. 

\end{defin}
Note that $d_A$ is defined unless $A$ can be shifted to $\{d=0\}$, the fixed hyperplane of $I$.
\begin{theor}\label{checkunicorn}
1. $\check A:=\conv ((A+IA)\setminus\{d=0\})$ has $d_A\mathcal{I}$ as a Minkowski summand.

2. The support function of $\check A$ is the minimum of those of $\check A_{(b)}$ over all $b\in\Z$.

3. The polytope $\check A$ is compatible with the fan $[A]([\check A]-[A])$.

4. The Minkowski difference $\check A-d_A\mathcal{I}$ is compatible with $[A]([\check A-d_A\mathcal{I}]-[A])$.
\end{theor}
We prove only part 2: part 1 is clear, and parts 3\&4 are deduced from 2 in the same fashion as Theorem \ref{checkunicorn}.2 and Corollary \ref{corolcritunic} from Theorem \ref{checkunicorn}.1.
\begin{proof}
The sought equality $\check A(\gamma)=\min_b\check A_{(b)}(\gamma)$ for $\gamma\in(\Z^n)^*$ can be reduced to the case $n=2$ by projecting $A$ along $\{d=0\}\cap\ker\gamma$. So we assume $n=2$, and choose the points $a\in A^\gamma$ and $a'\in IA^\gamma$ with highest possible values of $d$. We have two cases.

1) Assume $d(a)+d(a')\ne 0$, then $a+a'$ is a point of both $\check A$ and all $\check A_{(b)}$, at which $\gamma$ attains its maximum. This proves the sought equality.

2) If $d(a)+d(a')=0$, then $a'=Ia$, and $a+a'$ is a vertex of $\conv(A+IA)$, however it belongs neither to $\check A$ nor to $\check A_{(\pm d(a))}$. In this case we choose the points $q\in (A\setminus\{d=d(a)\})^\gamma$ and $q'\in (IA\setminus\{d=d(a')\})^\gamma$ with highest possible values of $d$.

Assume wlog that $\gamma(q-a)\geqslant\gamma(q'-a')$ (otherwise applying the symmetry reverses the inequality). Then $q+a'$ is a point of both $\check A$ and all $\check A_{(b)}$, at which $\gamma$ attains its maximum. This proves the sought equality.
\end{proof}

\subsection{Mixed volumes of incrementals}

Since $\hat A$ and $\check A$ for $A\in\Z^n$ are incremental polytopes for some complete intersections, we care how they evaluate the quantity from Theorem \ref{thcatchup}.2.

For $A\subset\Z^3$, it can be done in the language of link polygons $[A\cap\ldots/E]$ and $[A-\ldots/E]$ (Definition \ref{defpolylink}) for certain special edges $E\subset A$.
\begin{defin}\label{defblinder}
An edge $E$ is called {\it horizontal}, if the first coordinate is constant on it. 

It is called a {\it blinder}, if $E+IE$ is an edge of $A+IA$ (equivalently, if it is a preimage of an edge of $pA$, which is itself an edge contained in a plane $\{d=b\},\,b\in\Z$).
\end{defin}
Let $\overline b$ and $\underline b$ be the largest and smallest value of the first coordinate on $A$, $p$ the projection along the antiinvariant line of the involution $I$, and $e:=e_n$ the polynomial from Theorem \ref{bkk1}.
\begin{utver}\label{answerlink3}$e(A,\hat A-A)$, $e(\mathcal{I},A,\check A-d_A\mathcal{I}-A)$ and $e(A,\check A-d_A\mathcal{I}-A)$ equal

$$\MV(A,A\setminus H_{\underline b},A+A\setminus H_{\underline b})+\MV(A,A\setminus H_{\overline b},A+A\setminus H_{\overline b})-2\Vol A -\sum_{b\in(\underline b,\overline b)}\sum_{\mbox{\scriptsize edge }E\subset H_b}\Vol E\cdot \Vol [A\cap H_b/E],$$

$$\Vol pA-\sum\limits_{b\in\Z}\sum\limits_{\mbox{\scriptsize blinder }E\subset \{d=b\}}\Vol E\cdot\Vol[pA-p(A\setminus \{d=b\})/E], \eqno{(*)}$$

$$\mbox{ and }\Vol(A+IA)-d_A\cdot (*)-
\sum\limits_{b\in\Z}\sum\limits_{\mbox{\scriptsize blinder }E\subset \{d=b\}} \Vol E \cdot \Vol[(A+IA)\cap\{d=0\}/E]
.$$

\end{utver}
This lengthy form of the answer is the most convenient when it comes to explicit computations. For instance, it is one of key tools in the proof of irreducibility of symmetric complete intersection curves in \cite{symm}. 
For the proof, apply Proposition \ref{locvol} to every term in the sums of the following lemma (which is valid any dimension $n$).
\begin{lemma}\label{lmvincr} 
We have
$e(A,\hat A-A)=e(A,A)-\sum\nolimits_{b\in\Z} (e(A,A)-e(A,\hat A_{(b)}-A)),$
$$e(\mathcal{I},A,\check A-d_A\mathcal{I}-A)=e_{n-1}(pA,pA)-\sum\nolimits_{b\in\Z} (e_{n-1}(pA,pA)-e_{n-1}(pA,p\check A_{(b)}-pA)),\mbox{ and }$$
$$e(A,\check A-d_A\mathcal{I}-A)=e(A,IA-d_A\mathcal{I})-\sum\nolimits_{b\in\Z} (e(A,IA-d_A\mathcal{I})-e(A,\check A_{(b)}-d_A\mathcal{I}-A)).$$

\end{lemma}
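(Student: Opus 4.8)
The plan is to derive all three identities from the $\Z$-linearity of the corner-locus operator — that is, from Theorem~\ref{locdelta} (equivalently Corollary~\ref{locdeltalin}) — via the generating-function description of $e_n$. Write $a:=A(\bullet)$ for the support function of $A$ and $a^I:=(IA)(\bullet)$. Recalling the Remark after Corollary~\ref{locdeltalin}, for $\Phi(u,v):=\frac{u}{1-u}\cdot\frac{v}{1-v}$ with degree-$n$ part $\Phi_n$, and any (possibly virtual) $Y$ with support function $y$, one has $e_n(A,Y)=\frac{(-1)^n}{n!}\,\delta^n(\Phi_n(a,y)\cdot\R^n)$, and the functional $G\mapsto\delta^n(G\cdot\R^n)$ is $\Z$-linear on degree-$n$ piecewise-polynomial functions. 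Two reductions come first. Since $\mathcal I$ is a primitive lattice segment spanning the $p$-direction and occurs in the last argument of $e_n(\mathcal I,A,\check A-d_A\mathcal I-A)$, every mixed-volume monomial in which $\mathcal I$ appears twice vanishes, so the summand $-d_A\mathcal I$ may be dropped and localizing along $\mathcal I$ gives $e_n(\mathcal I,A,\check A-d_A\mathcal I-A)=e_n(\mathcal I,A,\check A-A)=e_{n-1}(pA,p\check A-pA)$. Consequently it suffices to prove the first identity and the third, as the second will fall out of the third by restriction to the $I$-fixed directions.

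For the first identity I would invoke the proof of Theorem~\ref{hatunicorn}.1 as it stands: it establishes $\hat A_{(2b)}(\bullet)=2a-m_{(b)}$, $\hat A_{(2b+1)}(\bullet)=2a$ and $\hat A(\bullet)=2a-\sum_b m_{(b)}$, where $m_{(b)}:=a-(A\setminus H_b)(\bullet)\geqslant 0$, $m_{(p)}m_{(q)}\equiv 0$ for $p\ne q$, and $m_{(b)}\not\equiv 0$ only for the finitely many $b$ with $H_b$ through a vertex of $\conv A$. Put $F_\infty:=\Phi_n(a,a)$, $F:=\Phi_n(a,\hat A(\bullet)-a)$ and $F_b:=\Phi_n(a,\hat A_{(b)}(\bullet)-a)$. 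At every covector $l$ at most one $m_{(b)}(l)$ is nonzero, so there $\hat A(l)-a(l)=a(l)-m_{(b_0)}(l)=\hat A_{(2b_0)}(l)-a(l)$ for the active index $b_0$, while $\hat A_{(b)}(l)-a(l)=a(l)$ for every other $b$ (in particular for all odd $b$); hence the pointwise identity $F-F_\infty=\sum_{b\in\Z}(F_b-F_\infty)$ holds with finitely many nonzero summands. Applying the functional $G\mapsto\frac{(-1)^n}{n!}\delta^n(G\cdot\R^n)$ — equivalently, Theorem~\ref{locdelta} applied to the pair $(F,F_\infty)$ and to each $(F_b,F_\infty)$, whose disagreement loci are the pairwise disjoint sets $\{m_{(b)}>0\}$ — turns this into $e(A,\hat A-A)-e(A,A)=\sum_{b\in\Z}(e(A,\hat A_{(b)}-A)-e(A,A))$, i.e.\ the claim; the terms with $m_{(b)}\equiv 0$ (in particular all odd $b$, where $e(A,\hat A_{(b)}-A)=e(A,2A-A)=e(A,A)$) contribute zero.

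For the third identity the scheme is identical once the correct base and disjointness are identified. By Theorem~\ref{checkunicorn}.2, $\check A(\bullet)=\min_b\check A_{(b)}(\bullet)$, and computing the support function of $\check A_{(b)}=\conv((A\setminus\{d=b\}+IA)\cup I(A\setminus\{d=b\}+IA))$ as the maximum of those of its two pieces gives $\check A_{(b)}(l)=(a+a^I)(l)-\nu_{(b)}(l)$ with $\nu_{(b)}(l):=\min(a(l)-(A\setminus\{d=b\})(l),\ a(Il)-(A\setminus\{d=b\})(Il))\geqslant 0$. For $b$ outside the finite range of $d$ on $A$ one has $\nu_{(b)}\equiv 0$ and $\check A_{(b)}(\bullet)=a+a^I=(A+IA)(\bullet)$, whence $e(A,\check A_{(b)}-d_A\mathcal I-A)=e(A,IA-d_A\mathcal I)$ — the base value. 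The key point is that $\nu_{(b)}(l)>0$ forces \emph{both} minimands to be positive, i.e.\ $A^l\subseteq\{d=b\}$ \emph{and} $A^{Il}\subseteq\{d=b\}$; thus $\nu_{(p)}\nu_{(q)}\equiv 0$ for $p\ne q$, only finitely many $\nu_{(b)}$ are nonzero, and $\check A(\bullet)=(a+a^I)-\sum_b\nu_{(b)}$. Running the argument of the first identity with $F_\infty:=\Phi_n(a,a^I-d_A\sigma)$ (where $\sigma:=\mathcal I(\bullet)$), $F:=\Phi_n(a,\check A(\bullet)-d_A\sigma-a)$ and $F_b:=\Phi_n(a,\check A_{(b)}(\bullet)-d_A\sigma-a)$ then yields the third identity. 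The second identity is the same computation restricted to the subspace $(\R^{n-1})^*\subset(\R^n)^*$ on which $I$ acts trivially: there $a^I=a$, the term $d_A\sigma$ disappears, $\nu_{(b)}$ becomes the support-function drop of $pA$ along the image slice, and the triple $(pA,p\check A,\{p\check A_{(b)}\})$ satisfies exactly the hypotheses used for the first identity.

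The step I expect to be the real obstacle is the disjointness $\nu_{(p)}\nu_{(q)}\equiv 0$ in the symmetric case, the one non-formal ingredient. For $\hat A$ the analogous fact $m_{(p)}m_{(q)}\equiv 0$ is immediate, but for $\check A$ a single $l$ may have $A^l$ in one slice $\{d=p\}$ and $A^{Il}$ in another slice $\{d=q\}$, so "the slice $\{d=b\}$ matters at $l$" is not by itself exclusive in $b$. The resolution is the observation above: because $\check A_{(b)}(\bullet)$ is a \emph{maximum} of two $I$-conjugate terms of linear growth, deleting one slice can lower it only by lowering \emph{both} conjugates, and that pins the relevant face to a single slice. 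Once this is secured, the remainder is the tautological content of Theorem~\ref{locdelta} together with the harmless re-indexing that lets the $b$-sums range over all of $\Z$.
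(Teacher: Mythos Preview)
Your approach is essentially the paper's: both derive all three identities from the tautological Theorem~\ref{locdelta}/Corollary~\ref{locdeltalin}, using Theorem~\ref{hatunicorn}.1 (for the first identity) and Theorem~\ref{checkunicorn}.2 (for the other two) to identify the per-component pieces $\tilde m_{i,\lambda}$ with the support functions of $\hat A_{(b)}$ resp.\ $\check A_{(b)}$. The paper phrases this as a direct application of Corollary~\ref{locdeltalin} with $(m_2,\tilde m_2)=(\hat A(\bullet),2A(\bullet))$ and $(m_3,\tilde m_3)=(\check A(\bullet),(A+IA)(\bullet))$, then simplifies Part~2 via Remark~\ref{remmvprod}; you instead verify the pointwise decomposition $F-F_\infty=\sum_b(F_b-F_\infty)$ by hand and reduce Part~2 to the $(n-1)$-dimensional instance by projecting along $\mathcal{I}$ up front. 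These are the same argument in different packaging. Your explicit verification of $\nu_{(p)}\nu_{(q)}\equiv 0$ (via the observation that $\nu_{(b)}(l)>0$ forces $A^l\subset\{d=b\}$, which already pins $b$) is a step the paper leaves implicit in its appeal to Theorem~\ref{checkunicorn}.2, so it is a useful addition rather than a detour.
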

\begin{proof}
Part 1: apply Corollary \ref{locdeltalin} and its subsequent remark to $d=n$, $F=\R^n$, $p=2$, and  $\Phi(m_1,m_2)=e_n(m_1,m_2-m_1)$, with $m_1=\tilde m_1$, $m_2$ and $\tilde m_2$ the support functions of $A$, $\hat A$ and $2A$ respectively. It gives the sought equality, because $\tilde m_{2,\lambda}$ are the support functions of those $\hat A_b$ that do not coincide with $\hat A$ by Theorem \ref{hatunicorn}.1.  

Parts 2 and 3: the same for $m_1=\tilde m_1$, $m_2=\tilde m_2$, $m_3$ and $\tilde m_3$ the support functions of $\mathcal{I}$, $A$, $\check A$ and $A+IA$ respectively. The answer given by Corollary \ref{locdeltalin} for Part 2 should be further simplified with the equality of Remark \ref{remmvprod}.

\end{proof}

\section{Engineered complete intersections}\label{scci}

\subsection{Engineered complete intersections are smooth}

In the beginning we promised to study critical complete intersections: the ones whose defining equations are partial derivatives of a given generic polynomial. To do so uniformly, we rise the generality.

For a finite set $A\subset M\simeq\Z^n$, 
define the bilinear product $*:\C^A\times\C^A\to\C^A$ as $$x^a*x^b:=\begin{cases}
x^a \mbox{ if } a=b \\ 0 \mbox{ otherwise,}
\end{cases}$$so that the coefficient of every monomial in the product $f*g$ is the product of the coefficients of the same monomial in the polynomials $f$ and $g$.
\begin{exa}
The following operations on Laurent polynomials $f\in\C^A$ can be represented as $v*f$ for an appropriate $v\in\Q^A\subset\C^A$:

-- taking a partial derivative or antiderivative (of any order) of $f$;

-- taking a homogeneous component of $f$ with respect to any valuation on the lattice $M$;

-- any linear combination or composition of these operations.
\end{exa}

\begin{defin} 1. If $A\subset M\simeq\Z^n$ is the set of monomials of a Laurent polynomial $f$, and $v_1,\ldots,v_k\in\C^A$ are linearly independent, then the system of equations $v_1*f=cdots=v_k*f=0$, is called an {\it engineered complete intersection}.

2. More generally, if $S_1,\ldots,S_m$ are engineered complete intersections, and $g_1,\ldots,g_m\in T\simeq\CC^n$ are generic, then the $\bigcap_i g_i\cdot S_i$ is called an {\it $m$-engineered complete intersection}.

\end{defin}
For $i=1,\ldots,m$, let $A_i\subset M$ be the set of monomials of $f_i$, let $v_{i,j}\in\C^{A_i}$ be linearly independent, and $g_i\in T\simeq\CC^n$ generic (note that $g_1$ can be set to $1\in T$ with no loss in generality, which is especially convenient for $m=1$). We study the $m$-engineered complete intersection $$v_{i,j}*f_i(g_i\cdot x)=0,\,i=1,\ldots,m.\eqno{(*)}$$
\begin{utver}\label{propgenccireg}
For generic $(f_1,\ldots,f_m)\in\C^{\mathcal{A}}:=\C^{A_1}\oplus\cdots\oplus\C^{A_m}$, this system of equations $(*)$ defines a regular complete intersection in the torus $T\simeq\CC^n$.
\end{utver} 
\begin{proof} We first assume that $m=1$, 
so that the system of equations of interest has the form $v_j*f(x)=0,\, j=1,\ldots,k,\, f\in\C^A$. In this case we find a nondegenerate $k\times k$ complex matrix $C$ and $a_1,\ldots,a_k\in A$ such that the tuple $\tilde v:=v\cdot C$ is ``unitriangular'': the coefficient of the monomial $x^{a_i}$ in $\tilde v_j$ is 0 for all $j<i$ and 1 for $j=i$.

Define $$L_i\subset\underbrace{\C^A\oplus\cdots\oplus\C^A}_{i}$$ as the subspace 
of tuples $(\tilde v_1*f,\ldots,\tilde v_i*f)$ for all $f\in\C^A$. We observe that Proposition \ref{propregind} is applicable to $L:=L_i,\, a:=a_i$ and $L':=L_{i-1}$ (with $L_0:=\{0\}$), thanks to the ``unitriangularity'' of $\tilde v$. Thus we conclude by induction on $i$ that $L_i$ is regular for the torus $T$. This is what we need: for generic $f\in\C^A$, the tuple $\tilde v*f$ is a generic point of $L_k$, thus the regularity of $L_k$ for $T$ means that $\bigl(\tilde v*f\bigr)(x)=0$ defines a regular complete intersection in the torus $T$, thus so does $\bigl(v*f\bigr)(x)=\bigl(f*\tilde vC^{-1}\bigr)(x)=0$.

\vspace{1ex}

We now come back to arbitrary $m$ in the definition of the engineered complete intersection. 

For regular subvarieties $V_i$ in the algebraic group $T$, almost any choice of elements $g_i\in T$ ensures that the shifted copies $g_i\cdot V_i$ intersect transversally. 
We shall apply this fact to $V_i$ defined by the equations ${v_{i,j}}*f_i(x)=0$. Since every $V_i$ is a regular complete intersection (this is shown in the first part of the proof), we conclude that the equations
$${v_{i,j}}*f_i(g_i\cdot x)=0,\,i=1,\ldots,m,$$
define a regular complete intersection for generic $f_i\in\C^{A_i}$ and $g_i\in T$.
This is equivalent to the statement of the proposition, because the map $$T\times\C^{A_i}\to\C^{A_i},\,(g_i,f_i)\mapsto f_i(g_i\cdot x),$$is dominant.
\end{proof}

\subsection{Engineered complete intersections are NUC} To prove it, we first introduce notation to conveniently describe the cancellation matrices $C_l,\,l\in M^*$, for an engineered complete intersection. 
\begin{defin}\label{defsuppseq}
Assume that finite $A\subset M\simeq\Z^n$ is not contained in a hyperplane, $l\in M^*$ is non-zero, and $v_1,\ldots,v_k\in C^A$ are linearly independent.

1. A sequence $\varphi:[1..k]\to\Z$ is said to be {\it $l$-linearly independent}, if, for every $c\in\Z$, the restrictions of the functions $v_i,\, \varphi(i)\ge c$, to $$A_{\geqslant c}:=A\cap\{a\,|\,l(a)\geqslant c\}$$ are linearly independent.

2. A sequence $\varphi:[1..k]\to\Z$ is said to be {\it $l$-support}, if it satisfies one of the following equivalent conditions:

a) it is $l$-linearly independent, but increasing its value at $i$ makes its restriction to $[1..i]$ $l$-linearly dependent, for every $i\leqslant k$;

b) its restriction to $[1..k-1]$ is $l$-support, and it is $l$-linearly independent, but increasing its value at $k$ makes it $l$-linearly dependent.

We shall denote the $l$-support sequence by $\varphi_l$, thanks to the following fact.
\end{defin}
\begin{utver}\label{existslsupp}
An $l$-support sequence exists, and it is unique.
\end{utver}
\begin{proof}
We shall use the version (b) of the definition, and assume by induction that the $l$-support restriction of $\varphi$ to $[1..k-1]$ exists and is unique.

Now define $\varphi(k)$ to be the maximal $C\in\Z$ for which $v_1,\ldots,v_k$ are $l$-linearly independent on $A_{\geqslant C}$. Such $C$ does exist, because $v_1,\ldots,v_k$ are linearly independent on $A_{\geqslant c}$ for small enough $c$, but not for large enough $c$.

It remains to notice that $\varphi$ defined in this way is $l$-linearly independent, but loses this property once we increase its value at $k$. The latter fact is tautological from the choice of $C$, and the former one is straightforward (though not tautological: we still have to verify the condition in Definition \ref{defsuppseq}.1 for all $c<C$).
\end{proof}
Since $({\sum c_iv_i})*f=\sum c_i ({v_i}*f)$, Proposition \ref{existslsupp} can be translated into the language of Laurent polynomials as follows.
\begin{sledst}\label{critcancellations}
If $A\subset M$ is the set of monomials of a Laurent polynomial $f$, then there exists a complex unitrianglular matrix $C_l$ such that the $l$-degrees of the tuple $\tilde f:=C_l\cdot (v_1*f,\ldots,v_k*f)$ form the $l$-support sequence: $$\deg_l \tilde f_i=\varphi_l(i).$$
\end{sledst}
\begin{rem}
1. The proof of Proposition \ref{existslsupp} is constructive, giving an algorithm to compute the matrix $C_l$ by induction on $k$, starting from $v_1,\ldots,v_k$ and $A$.

2. Such a numerical matrix $C_l$ is unique, up to adding to its $i$-column a multiple of its $j$-column for any $j<i$ such that $\varphi_l(j)\leqslant\varphi_l(i)$.

3. In the definition of nondegeneracy with cancellations we let the matrix $C_l$ be polynomial rather than numerical, and then it is not unique in the sense (2), unless $l$ belongs to the tropical fan of the engineered complete intersection (see Corollary \ref{tropcancel}).
\end{rem}
\begin{theor}\label{thccinondegexplic}
1. If $A\subset M$ is the set of monomials of $f$, then the engineered complete intersection $v_1*f=\cdots=v_k*f=0$ is cancellable with cancellations $C_l$ defined by Corollary \ref{critcancellations}.

2. For a generic Laurent polynomial $f\in\C^A$, it is nondegenerate upon these cancellations.
\end{theor}
\begin{proof}
For $c\in\Phi_l:=\varphi_l[1..k]$, denote $A\cap l^{-1}(c)$ by $A_c$ and the restriction of $f$ to $A_c$ by $f_c$. We need to prove for generic $f_c\in\C^{A_c}$ and all $i\in[1..k]$ that the system of equations
$${v_j}*f_c(x)=0,\,c\in\Phi_l,\,j\in\varphi_l^{-1}(c)\cap[1..i]$$defines a regular complete intersection in the algebraic torus $T\simeq\CC^n$. 
This is so by Proposition \ref{propgenccireg}, because this system of equations itself defines an engineered complete intersection.\end{proof}
\begin{sledst}\label{corolccinondeg}
The complete intersection $(*)$ is cancellable, and NUC for generic $f$.
\end{sledst}

\subsection{Geometry of engineered complete intersection.}\label{scancelcrit}{$ $}

{\it Proof of Theorem \ref{th0cci} on geometry of $S_1=\{f=\partial f/\partial x_1=0\}$.}
The complete intersection $S_1$ is smooth by Proposition \ref{propgenccireg} and NUC by Theorem \ref{thccinondegexplic} for generic $f\in\C^A$.
Comparing Theorem \ref{hatunicorn} to Corollary \ref{critcancellations} for this complete intersection, we find that $A$ and $\hat A$ are its incremental polytopes: $\phi_l(1)=A(l)$ and $\phi_l(1)+\phi_l(2)=\min_b\hat A_{(b)}(l)= \hat A(l)$. 

The latter polytope is 
compatible with the tropical fan $[A]([\hat A]-[A])$ by Corollary \ref{corolcritunic}. This fan is the tropicalization of $S_1$ by Theorem \ref{thcatchup}.5 for all $f\in\C^n$ such that $S_1$ is NUC. 

Thus, all such smooth $S_1$ have the same diffeomorphism type 
by Proposition 2.4 
in \cite{schon}.
Irreducibility, Calabi--Yau, and the Euler characteristics are deduced from Theorem \ref{thcatchup}. \hfill $\square$

\vspace{1ex}

This does not extend to all engineered complete intersections, because they may not have incremental polytopes, so Theorem \ref{thcatchup} may be not applicable to them. Our plan is as follows:

1) Formulate a general version of Theorem \ref{thcatchup} that works for all NUC complete intersections. Instead of the incremental polytopes (which may not exist), the combinatorial input in the general theorem will be the {\it tropicalization} of a cancellable complete intersection.

2) Find the tropicalization of an engineered complete intersection.

\begin{utver} \label{tropcancel}
If a complete intersection $f_1=\cdots=f_k=0$ in the torus $T\simeq\CC^n$ with the character lattice $M\simeq\Z^n$ is cancellable with cancellations $C_l$, then, denoting $\tilde f:=C_l\cdot f$, the following objects are well defined (i.e. depend only on $f$ and not on the choice of $C_l$):

1. The set $F_i$ of all $l\in M^*$ such that the system of equations $\tilde f_1^l=\cdots=\tilde f_i^l=0$ has a solution.

2. For every such $l$, the $l$-degree of the cancelled equation
$\tilde f_{i+1}$, denoted by $m_{i+1}(l)$.

3. The collection of fans $F_i$ and piecewise linear functions $m_i:F_{i-1}\to\R$ is a tropical complete intersection (in the sense of \cite{schon}, Definition 1.12).

4. The latter is the tropicalization of $f=0$ in the sense of Definition 1.8 
in \cite{schon}.
\end{utver}
It will be proved below together with Theorem \ref{thnewtcinondegexplic}.
\begin{rem}
Part (4) means that the fans $F_{i}$ and functions $m_i$ on them are the topicalizations of the sets $f_1=\cdots=f_{i}=0$ and the functions $f_i$ on them. The functions $m_i$ determine the fans $F_i$ (being the corner locus of $m_i$), see Remark 1.13 
in \cite{schon}. 
\end{rem}

As a consequence, we can apply the results of \cite{schon} on sch\"on and Newtonian complete intersections to a NUC complete intersection $f=0$. Let $(F_i,m_i)_{i=1,\ldots,k}$ be its tropicalization, as described by Proposition \ref{tropcancel}. Applying Theorems 1.5, 1.17 

and Example 1.15 
in \cite{schon}, we have the following. Recall that the Newton polyhedron $N_i$ of $f=0$ is the intersection of halfspaces $\{a\,|\,l(a)\leqslant m_i(l)\}$ over all non-zero $l\in M^*$.

\begin{theor} \label{thcatchup1} In this setting:
1. the set $\{f=0\}$ has at most isolated singularities.

2. If the polyhedra $N_1,\ldots,N_k$ have full dimension, and $k<n$, 
then $\{f=0\}$ is connected (and, moreover, its first $n-k$ Betti numbers equal those of $T$).

3. In particular, in this case $\{f=0\}$ is irreducible, unless it is a singular curve.

4. The Euler characteristics of $\{f=0\}$ equals 
$$\frac{\delta^n}{n!} \left(\frac{m_1}{1+m_1}\cdots\frac{m_k}{1+m_k}\right)+(-1)^{n-k}(\mbox{the sum of the Milnor numbers of the singularities, if any}).$$
Here $\delta$ is the corner locus, see Remark 1.18.1-3 
in \cite{schon} for details.

5. Assume that the function $\sum_i m_i$ extends from 
the fan $F_k$ to the support function $m:M^*\to\Z$ of a convex polytope $P$, and $F_{k}$ is a union of certain cones $S_i$ from the dual fan of $P$, each of whom as a semigroup $S_i\cap M^*$ is generated by lattice points on the boundary of the dual polytope $\{a\,|\, m(a)=1\}$.
Then the $P$-toric compactification of a smooth $f=0$ is Calabi--Yau.
\end{theor}
{\it Proof of Theorem \ref{thcatchup}.}
If the incremental polytopes $A_j$ exist, then its support function $A_j(\bullet)$ equals $m_1+\cdots+m_j$, so $m_i(l)=A_i(l)-A_{i-1}(l)$. Plug this into the preceding theorem.
\hfill$\square$

\vspace{1ex}

We now summarize how the results of this section describe the geometry of engineered complete intersections.
\begin{defin}\label{defteci}

For finite $A\subset M$ and linearly independent $v_1,\ldots,v_k\in\C^A$, the {\it tropical engineered complete intersection} is the sequence $(F_i,m_i)_{i=1,\ldots,k}$, in which $F_i$ is a tropical fan in the space $M^*\otimes\R=:F_0$, defined as the corner locus of $m_i$, and $m_i(l)$ is a piecewise linear function on $F_{i-1}$, defined as $\varphi_l(i)$ (the latter is introduced in Corollary \ref{critcancellations}).

\end{defin}
\begin{utver}\label{remmainengin} Let $A\subset M$ be the set of monomials of $f$.

1. The tropicalization of the engineered complete intersection $S$ given by the equations $f_i:=v_i*f=0$, is the tropical engineered intersection, associated to $(A,v_1,\ldots,v_k)$ as above.

2. For generic $f\in\C^A$, the complete intersection $S$ is smooth and NUC, and all such complete intersections are diffeomerphic.

3. Plugging the tropicalization $m_i(l)=\varphi_l(i)$ into Theorem \ref{thcatchup1}, we learn about connectedness, Euler characteristics and Calabi--Yau-ness of the engineered complete intersection $S$.
\end{utver}
\begin{proof}
Part 1 is by Corollary \ref{critcancellations}. Part 2 holds true because $S$ is sch\"on by Theorem \ref{thnewtcinondegexplic}, sch\"on varieties do not change topology under deformations preserving the tropical fan, by Proposition 2.4 
in \cite{schon}, and the tropical fan of $S$ is constant by Part (1).
\end{proof}

\section{Symmetric complete intersections}\label{ssci}

\subsection{Symmetric complete intersections are smooth}

Let $A$ be a finite set in the monomial lattice $\Z^n$ of the algebraic torus $\CC^n$ with coordinates $x_1,\ldots,x_n$, let $I:\CC^n\to\CC^n$ interchange $x_1$ and $x_2$, let $f\in\C^A$ be a generic polynomial, and let $d_A$ be the denominator of $A$ (Definition \ref{defdenom}). We now interpret it as the largest integer such that
$f-f\circ I$ is divisible by $x_1^{d_A}-x_2^{d_A}$ in the case $0\in A$, and denote the quotient by $F$.

\begin{utver}\label{proptransv} 
If $A\ni 0$ is not contained in the fixed hyperplane of the involution $I$, then, for a generic $f\in\C^A$, the hypersurfaces $\{f=0\}$, $\{F=0\}$ 
and $\{x_1^{d_A}=x_2^{d_A}\}$ are smooth and mutually transversal.

In particular, the algebraic set $\{f(x)=f(Ix)=0\}$ splits into a smooth hypersurface in $\{x_1^{d_A}=x_2^{d_A}\}$ and one more smooth codimension 2 set transversal to it in $\CC^n$.
\end{utver}
\begin{rem}
The first statement makes no sense without the assumption $0\in A$ (because $x_1^{d_A}-x_2^{d_A}$ may not divide $f(x)-f(Ix)$ in this case). However, the second one not only makes sense for $0\notin A$, but is obviously invariant under translations of $A$. Thus it is valid without the assumption $0\in A$.
\end{rem}
{\it Proof.} 
For smoothness of the hypersurface $F=0$, 
apply Lemma \ref{fdiscr} to the regular functions 
$F,\, \mu\in A\setminus\{0\}$, on $(\C^*)^n$. For its transversality to $\{x_1^{d_A}=x_2^{d_A}\}$, apply the same lemma to the restrictions of the same functions to the smooth variety $\{x_1^{d_A}=x_2^{d_A}\}$.

We have proved that, for generic $f\in\C^{A\setminus\{0\}}$, the hypersurfaces $$
\{F=0\} \mbox{ and } \{x_1^{d_A}=x_2^{d_A}\}\eqno{(*)}$$are smooth and transversal. It now remains to show that, for such a generic $f\in\C^{A\setminus\{0\}}$ and generic $c\in\C$, the difference $f-c\in\C^A$ satisfies the statement of the lemma.

Note that changing $f$ to $f-c$ does not affect the hypersurfaces $(*)$, so it is enough to prove that 

$c$ is a regular value for $f$ and for its restrictions to the varieties $(*)$ and to their intersection. All but finitely many values of $c$ are regular by the Bertini-Sard theorem.

\hfill$\square$

\subsection{Symmetric complete intersections are often NUC}\label{ssymmnuc}

For $0\in A\subset\Z^n$ with the involution $I:\Z^n\to\Z^n$ swapping the first two coordinates, we have introduced the denominator $d_A$, the 
segment $\mathcal{I}$, and the polytopes $A_{(b)}$ (Definition \ref{defdenom}), whose intersection is the symmetric incremental $\check A:=conv((A+IA)\setminus\{d=0\})$ (Theorem \ref{checkunicorn}). We now additionally assume $$d_A=d_{\Gamma\cap(A+IA)\setminus\{d=0\}} \mbox{ for every symmetric face }\Gamma=I\Gamma\subset\check A,\,\dim\Gamma>2.\eqno{(*)}$$

\begin{defin}\label{defrestr}
1. For a set $A\subset M\simeq\Z^n$ and $l\in M^*$, we denote $A(l):=\max l(A)$, and define the {\it support face} $A^l$ as $\{a\in A\,|\,l(a)=A(l)\}$.

2. For $f(x)=\sum_{a\in A}c_ax^a$, we define its {\it restriction} to $B\subset A$ as $f|_B:=\sum_{a\in B}c_ax^a$.
\end{defin}
\begin{utver}\label{thsymmincrem}
1. For any $A\subset\Z^n$, generic $f\in\C^A$ and generic $c\in\C$ including all $d_A$-roots of unity, the complete intersection $x_1-cx_2=f=F=0$ is NUC with cancellation matrices as in the table below and incremental polytopes $\mathcal{I}$, $A+\mathcal{I}$ and $\check A-(d_A-1)\mathcal{I}$.

2. In particular, $x_1^{d_A}-x_2^{d_A}=f=F=0$ is NUC with incrementals $d_A\mathcal{I}$, $A+d_A\mathcal{I}$ and $\check A$. 

3. If $A$ satisfies $(*)$, then $f=F=0$ is NUC with incrementals $A$ and $\check A-d_A\mathcal{I}$.

\end{utver}

\begin{rem}
1. Without condition $(*)$, the system $f=F=0$ may be not NUC, but cancellable once $F$ is defined (i.e. unless $d$ is constant on $A$). The proof is the same as below.

2. If there are no blinders (Definition \ref{defblinder}), the genericity assumption is just nondegeneracy.
\end{rem} 

\begin{proof} Part 2 is a corollary of part 1. For parts 1 and 3, following Definition \ref{defnuc}, to every $l\in(\Z^n)^*$ we associate a suitable cancellation matrix $C_l$, and look at the cancelled system of equations. In the list below, for $l$ satisfying each of the specified conditions, we provide the cancellation matrix $C_l$ and the support sets $A_l'$ and $A_l''$ for the last two equations of the cancelled system. This data is shown for the system of part (1), but chosen in such a way that the last $2\times 2$-minor of the matrix $C_l$ suits part 3 as well (which is not automatic).

{\small
\begin{center}
\begin{tabular}{c c c c} 
\toprule 

& Condition on $l$ & matrix $C_l$ & support sets $A_l'$ and $A_l''$ \\ [0.5ex] 
\bottomrule \toprule

{\bf I} & \begin{tabular}[x]{@{}c@{}} $l(v)=0$ for $v:=(1,-1,0,\ldots,0)$, \\ but $d$ does not vanish on $(A+IA)^l$\end{tabular} &
$ \begin{bmatrix}
1 & 0 & 0 \\
0 & 1 & 0 \\
0 & 0 & 1
\end{bmatrix}$ & \begin{tabular}[x]{@{}c@{}} $A^l$, \\  $(\check A-d_A\mathcal{I})^l$ \end{tabular} \\ 
\midrule

{\bf II} & \begin{tabular}[x]{@{}c@{}} $d$ vanishes on $(A+IA)^l$, \\ and thus is a constant $d^l$ on $A^l$\end{tabular} &
$ \begin{bmatrix}
1 & 0 & 0 \\
0 & 1 & -x_1^{-d^l}(x_1^{d^l}-x_2^{d^l})/(x_1^{d}-x_2^{d}) \\
0 & 0 & 1
\end{bmatrix}$ & \begin{tabular}[x]{@{}c@{}} $A^l$, \\  $(\check A-d_A\mathcal{I})^l-d^lv$ \end{tabular} \\ 
\midrule

{\bf III} & Not {\bf (II)} and $l(v)<0 $& $ \begin{bmatrix}
1 & 0 & 0 \\
0 & 1 & -1/\bigl(1-(x_1/x_2)^d\bigr) \\
0 & 0 & 1
\end{bmatrix}$ & $A^l,\; (IA)^l$ \\ 
\midrule

{\bf IV} & Not {\bf (II)} and $l(v)>0 $& $ \begin{bmatrix}
1 & 0 & 0 \\
0 & 1 & -1/\bigl(1-(x_2/x_1)^d\bigr) \\
0 & 0 & 1
\end{bmatrix}$ & $A^l,\; (IA)^l$ \\ [1ex] 
\bottomrule
\end{tabular}
\end{center}
}

\vspace{1ex}

The rational function in {\bf (III-IV)} stands for its Taylor polynomial of sufficiently high degree.

It remains to explain why, in each of these cases, the cancelled system of equations is regular for generic $f\in\C^A$.
We denote the respective cancelled system by $G=0$ and $g=0$, for the systems of equations considered in part 1 and 3 of the theorem respectively.

{\bf (I)}: The system $g=0$ is symmetric itself, so its regularity  follows by Proposition \ref{proptransv} applied to $f^l$ in place of $f$, and the same denominator $d_{A^l}=d_A$ thanks to condition $(*)$.

In the absence of condition $(*)$, the system $g=0$ has singular roots $(x_1,\ldots,x_n)$ such that $x_2/x_1$ is a root of unity of degree strictly higher than $d_A$. Thus such roots do not satisfy the first equation $x_1-cx_2=0$ of the system $G=0$, so the system $G=0$ is regular as well.

{\bf (II)} for $l(v)=0$: The second equation of the system $g=0$ is $I$-symmetric, thus its roots $(x_1,\ldots,x_n)$ are regular, as soon as we have condition $(*)$ or unless $x_2/x_1$ is a root of unity of degree strictly higher than $d_A$.

The first equation of the system $g=0$ is independent of the second one, i.e. the tuples $g$ for all $f\in\C^A$ form a vector subspace of the form $\C^{A'_l}\oplus L$ in the space $\C^{A'_l}\oplus\C^{A''_l}$. Thus, by Proposition \ref{propregind}, roots of the system $g=0$ are again regular, unless we have condition $(*)$ or unless $x_2/x_1$ is a root of unity of degree strictly higher than $d_A$.

In the latter case, the singular roots of $g=0$ never satisfy the first equation $x_1-cx_2=0$ of the system $G=0$, so the system $G=0$ is regular as well.

\vspace{1ex}

For the rest of the cases, the first equation of the system $G=0$ has the form (monomial $=0$), making this system inconsistent and thus regular, and the system $g=0$ is a general one with the specified support sets (i.e. the tuples $g$ for all $f\in\C^A$ form the vector space $\C^{A_l'}\oplus\C^{A_l''}$). Thus $g=0$ is regular, regardless of condition$(*)$.

\vspace{1ex}

We have proved that the complete intersections mentioned in the theorem are all NUC. To check that the incremental polytopes are as stated in part 3, we should check that the support functions of these polytopes at every $l$ equal $l(A'_l)$ and $l(A'_l)+l(A''_l)$. This is straightforward from the last column of the table, and the same for part 1. 
\end{proof}

\subsection{Geometry of symmetric complete intersections}

Once $A\ni 0$ is not contained in the invariant hyperplane of the involution $I:\Z^n\to\Z^n$, almost all $f\in\C^A$ define a symmetric complete intersection $f(x_1,x_2,x_3,\ldots,x_n)=f(x_2,x_1,x_3,\ldots,x_n)=0$ which splits into the {\it diagonal} part $x_1^{d_A}-x_2^{d_A}=f=0$ and the {\it proper} part $$S:=\{f=F=0\} \mbox{ with } F:=\bigl(f(x_1,x_2,x_3,\ldots,x_n)-f(x_2,x_1,x_3,\ldots,x_n)\bigr)/(x_1^{d_A}-x_2^{d_A}).$$
The diagonal part for generic $f\in\C^A$ splits into nondenerate hypersurfaces inside $d_A$ hyperplanes of the form $x_1=\sqrt[d_A]{1}\cdot x_2$, so its geometry is described in terms of $A$ by the classical BKK toolkit. We now study the geometry of the proper part in terms of $A$.

Recall that $\mathcal{I}$ is a primitive segment on the antiinvariant line of the involution $I:\Z^n\to\Z^n$ swapping the first two coordinates, and $p$ is the projection along this line.
\begin{theor}\label{th0sci} Assume that $A$ satisfies condition $(*)$ of Section \ref{ssymmnuc}, and $f\in\C^A$ is generic, in the sense that it satisfies conditions of Propositions \ref{proptransv} and \ref{thsymmincrem}.

1. The proper part $S$ of the symmetric complete intersection is smooth and NUC with incremental polytopes $A$ and $\check A-d_A\mathcal{I}$, all such  $S$ are diffeomorphic to each other.

2. Their Euler characteristics equals
$e(A,\check A-d_A\mathcal{I}-A)$; this expression can be simplified as in Proposition \ref{answerlink3}, and further for $A\subset\Z^3$ as in Lemma \ref{lmvincr}.

3. Their tropical fan equals $[A]\cdot([\check A]-d_A[\mathcal{I}]-[A])$ (notation explained in Remark \ref{remcritci}.2).

4. They are irreducible and connected, if $A+d_A\mathcal{I}$ can be shifted to the interior of $\check A$.

5. Their closure in the $\check A$-toric variety are Calabi--Yau, if $\check A-d_A\mathcal{I}$ is reflexive.

6. The singular locus of the symmetric complete intersection $f(x_1,x_2,x_3,\ldots,x_n)=f(x_2,x_1,\allowbreak x_3,\ldots,x_n)=0$ 
consists of $d_A$ components of the form

$x_1-\sqrt[d_A]{1}\cdot x_2=f=F=0$. They are NUC with incremental polytopes $\mathcal{I}$, $A+\mathcal{I}$ and $\check A-(d_A-1)\mathcal{I}$, or just $pA$ and $p\check A$ as a complete intersection in the torus $\{x_1=\sqrt[d_A]{1}\cdot x_2\}$. They are smooth and diffeomorphic to each other.

7. Their Euler characteristics equals
$e(\mathcal{I},A,\check A-d_A\mathcal{I}-A)=e_{n-1}(pA,p\check A-pA)$; this expression can be simplified as in Proposition \ref{answerlink3}, and further for $A\subset\Z^3$ as in Lemma \ref{lmvincr}.

8. Their tropical fan equals $\mathcal{I}\cdot[A]\cdot([\check A]-d_A[\mathcal{I}]-[A])=[pA]\cdot([p\check A]-[pA])$.

9. They are irreducible (equivalently, connected), if $pA$ can be shifted to the interior of $p\check A$.

10. Their closure in the $\check A$-toric variety are Calabi--Yau, if $p\check A$ is reflexive.
\end{theor}

\begin{rem}\label{remsymmci}
1. The irreducibility condition is not a criterion: one can see it from the complete classification of reducible cases for $A\subset\Z^3$ in \cite{symm} We do not know whether this irreducibility condition survives without condition $(*)$. 

2. Parts 3 and 8 are valid without condition (*): their proof (see below) requires the complete intersection to be just cancellable, not NUC, and it is cancellable once $A$ cannot be shifted to the invariant hyperplane of the involution $I$.

3. Parts 5 and 10 are valid without condition (*) as well: in this case, the closure of the complete intersection in the $\check A$-toric variety is not smooth anymore, but its singularities are Gorenstein. This fact is however outside the scope of the present paper.

4. The rest of the statements fail without condition $(*)$.
\end{rem}
\begin{proof}
Parts 1 and 6 follow from  Propositions \ref{proptransv} and \ref{thsymmincrem} (except for the diffeomorphic statements). Then Parts 2-5 and 6-10 follow from  Theorem \ref{thcatchup}. Then the diffeomorphisms follow from Proposition 2.4 
in \cite{schon}.
\end{proof}

\section{NUC vs SCI}\label{scancel}

\begin{theor}\label{thnewtcinondegexplic}
1. If $f_1=f_2=\cdots=f_k=0$ is cancellable (Definition \ref{defnuc} here) then $f_1=f_2|_{\{f_1=0\}}=\cdots=f_k|_{\{f_1=\cdots=f_{k-1}=0\}}=0$ is Newtonian (Definition 1.7 
in \cite{schon}).

2. Being NUC and being Newtonian SCI (Definition 1.2 
in \cite{schon}) is the same thing. 
\end{theor}
\begin{rem}
Note that, in the subsequent proof, the first implication in case (ii) fails without regularity, not allowing to invert statement (1) of the theorem. Indeed, one easily finds a non-cancellable Newtonian complete intersection $\{f_1=f_2=0\subset\CC^2$ (even with integer-valued tropicalizations of the equations).
\end{rem}
\begin{proof} Throughout the proof, we denote by $X_\Sigma\supset T$ a tropical compactification of the complete intersection, i.e. a smooth toric variety  whose every orbit $O$ intersects the closure of $\{f_1=\cdots=f_i=0\}$ by a set of the expected dimension $\dim O-i$ for every $i\leq k$. 

\vspace{1ex}

First, assume that the complete intersection $f_1=\cdots=f_k=0$ in the torus $T$ is Newtonian SCI, and prove that it is NUC. 
Pick non-zero $l\in M^*$ and find the cancellation matrix $C_l$ satisfying Definition \ref{defnuc} of this paper. By induction on the number of equations we may assume that all columns of $C_l$ but the last one are already found. Denoting $\tilde f:=C_l\cdot f$, the equations $\tilde f^l_1=\cdots=\tilde f^l_{k-1}=0$ define a regular complete intersection by the choice of the first columns of $C_l$, and it remains to find $$\tilde f_k=f_k+\sum_{i=1}^{k-1}c_i f_i=f_k+\sum_{i=1}^{k-1}\tilde c_i\tilde f_i,$$such that $\tilde f^l_1=\cdots=\tilde f^l_{k}=0$ is a regular complete intersection too. Here $c_i$ are the entries in the unknown last column of $C_l$. We shall choose them (or, equivalently, $\tilde c_i$) as follows:

1) If $\tilde f_1^l=\cdots=\tilde f_{k-1}^l=0$ defines the empty set in $T$, we are good to go with $c_1=\cdots=c_{k-1}=0$.

2) If this set is not empty, then we choose $c_i$ in such a way that the $l$-degree $\deg_l \tilde f_k$ is as small as possible. Note that it cannot be arbitrarily small unless $\tilde f_1^l=\cdots=\tilde f_{k-1}^l=0$ defines the empty set (considered before) or $f_k$ vanishes on a component of $f_1=\cdots=f_{k-1}=0$, which would mean that $f_1=\cdots=f_{k}=0$ is not a complete intersection.

We shall now prove that the resulting matrix $C_l$ (with the last column $(c_1,\ldots,c_{k-1},1)$ defined as above) fits Definition \ref{defnuc}. 
Let $O$ be the orbit of $X_\Sigma$ whose cone contains $l$ in the relative interior. Its intersection $S$ with the closure of $f_1=\cdots=f_{k-1}=0$ is given by the equations $\tilde f_1^l=\cdots=\tilde f_{k-1}^l=0$. This key statement needs two clarificartions.

a) the functions $\tilde f_j^l$ on $T$ define a set in $O$ in the following sense: $O$ is a quotient torus of $T$, and, up to a monomial factor, $\tilde f^l_i$ can be regarded as a Laurent polynomial on $O$ lifted to $T$.

b) the statement is valid because $\tilde f_1^l=\cdots=\tilde f_{k-1}^l=0$ is a 
complete intersection (by the choice of $\tilde f_i$'s in the second paragraph of the proof).

Now we have the following options for $\tilde f^l_k$: it may vanish on all components of $\tilde f_1^l=\cdots=\tilde f_{k-1}^l=0$, or on some components, or on none. Let us consider each of these possibilities.

i) If $\tilde f^l_k$ vanishes on some components, this means that the corresponding components of the set $S\subset O$ participate in the divisor of zeroes and poles of $f_k|_{C_{k-1}}$ with lower multiplicities, than the other components, contradicting the newtonness of $f=0$.

ii) If $\tilde f^l_k$ vanishes entirely on the regular complete intersection $\tilde f_1^l=\cdots=\tilde f_{k-1}^l=0$, then $\tilde f^l_k$ can be represented as $\sum_{i=1}^{k-1}\alpha_i\tilde f^l_i$, thus $\tilde f_k-\sum_{i=1}^{k-1}\alpha_i\tilde f_i$ has lower $l$-degree than $\tilde f_k$, contradicting our choice of $\tilde f_k$ in the first paragraph of the proof.

iii) We are left with the last possibility: $\tilde f^l_k$ does not vanish on any of the components of $\tilde f_1^l=\cdots=\tilde f_{k-1}^l=0$. Then the equation $\tilde f^l_k=0$ on $S$ defines the intersection of $S$ with the closure of $f_1=\cdots=f_k=0$, which is regular because the latter complete intersection is SCI. Thus $\tilde f_1^l=\cdots=\tilde f_{k}^l=0$ is regular, i.e. $\tilde f=C_l\cdot f$ satisfies the definition of NUC.

\vspace{1ex}

Let us now prove the other implication: assume that the complete intersection $f_1=\cdots=f_k=0$ in the torus $T$ is cancellable or nondegenerate upon cancellations $C_l$. 

Denote $H_i:=\{f_1=\cdots=f_i=0\}$. Pick an interior point $l$ in the cone of $\Sigma$, corresponding to the orbit $O\subset X_\Sigma$, and denote $\tilde f:=C_l\cdot f$. The cancelled equations $\tilde f^l_1=\cdots=\tilde f^l_{i}=0$ define the intersection $\bar H_{i}\cap O$ (in the sense explained in the comment (a) above). If $f_1=\cdots=f_k=0$ is NUC, then, by its definition, the cancelled system is regular, thus $\bar H_{i}$ is smooth and transversal at its intersection with $O$, thus $f_1=\cdots=f_k=0$ is SCI.

Now assume that $f_1=\cdots=f_k=0$ is just cancellable (not necessarily NUC), and deduced that it is Newtonian. The definition of Newtonianity tells us to restrict attention to the case when the orbit $O$ has codimension 1.

In this case, at a generic point $z\in\bar H_{k-1}\cap O$, the function $\tilde f_k|_{\bar H_{k-1}}=f_k|_{\bar H_{k-1}}$ equals $u^{\deg_l\tilde f_l}$ for a suitable reduced local defining equation $\{u=0\}$ of the orbit $O$.

Thus the divisor of zeroes and poles of the function $\tilde f_k|_{\bar H_{k-1}}=f_k|_{\bar H_{k-1}}$ has the same multiplicity at every its point $z$: this multiplicity equals that of $u_{\bar H_{k-1}}$ times the $l$-degree $\deg_l \tilde f_k$. This means $f_1=\cdots=f_k=0$ is Newtonian, with the Newton data
$$m_i(l)=\deg_l(C_l\cdot f)_i.\eqno{(*)}$$

The latter conclusion at the same time proves Proposition \ref{tropcancel}.
\end{proof}

\section{Appendix: the proof of Theorem \ref{thconcr2}.}\label{proofth3} 
For $A\subset M\simeq\Z^n$ and non-zero $l\in M^*$, let $A(l)$ and $\bar A(l)$ be the highest and the second highest value of $l$ on $A$, and
let $A^l$ and $\bar A^l$ be the intersections of $A$ with $\{l=A(l)\}$ and $\{l=\bar A(l)\}$ respectively. We denote restrictions $f|_{A^l}$ and $f|_{\bar A^l}$ (see Definition \ref{defrestr}) as $f^l$ and $\bar f^l$.
\begin{rem}\label{remgenerdf} 1. A sufficient genericity assumption for Theorem \ref{thconcr2}.1 is: for every $l\in M^*$ such that the affine span of $A^l$ is parallel to the first coordinate axis, the system of equations $f^l=\partial f^l/\partial x_2=\cdots=\partial f^l/\partial x_n=\partial \bar f^l/\partial x_2=\cdots=\partial \bar f^l/\partial x_n=0$ has no solutions.

2. A sufficient genericity assumption for Theorem \ref{thconcr2}.2 is: for every $l\in M^*$ such that the affine span of $A^l$ contains 0, the system of equations $df^l=d\bar f^l=0$
has no solutions.

3. These assumptions are not necessary (in contrast to the nondegeneracy assumption for the BKK formula). It would be interesting to find the weakest genericity condition for this theorem (i.e. the hypersurface $D\subset\C^A$ such that it holds for $f\notin D$ and fails for $f\in D$).
\end{rem}

We prove the second part of the theorem; for the first one, the proof is the same with obvious alterations (or can be reduced to the first part by replacing $A\subset\Z^n$ with $\{1\}\times A\subset\Z\oplus\Z^n$).

\subsection{Proof of Theorem \ref{thconcr2} for $\{df=0\}$.}

Given a torus $T\simeq\CC^n$ with the character lattice $M\simeq\Z^n$, we need to compute the number $\sharp A$ of critical points of a generic Laurent polynomial $f\in\C^A$ on $T$, for finite $A\subset M$ not containing 0 and not contained in a hyperplane. 

Note that, if the linear span of $A$ is $\hat M\subset M$ of smaller dimension $\hat n$, then $f$ has either infinitely many or no critical points, so $\sharp A$ as defined in the statement of theorem makes little sense. However, in this case $f$ is the pull back of a polynomial $\hat f$ on the $\hat n$-dimensional torus $\hat T$ under the projection $T\mapsto\hat T$ corresponding to the embedding $\hat M\hookrightarrow M$ of their character lattices. In this case we define $\sharp A$ as the number of critical points of $\hat f:\hat T\to\C$.

We choose a generic coordinate system $M\to\Z^n$ in the sense that its last coordinate line $L$ for every $l$ is not contained in the vector span of $A^l\cup\bar A^l$ unless the latter is the whole $\R^n$, and is not contained in $\{l=0\}$ otherwise.
Denoting by $(x_1,\ldots,x_n)$ the repsective coordinates on the complex torus $T$, 

this ensures that the system of equations $\partial f/\partial x_1=\cdots=\partial f/\partial x_n=0$ is not only an engineered complete intersection, but the degree function $\varphi_l$ (see Corollary \ref{critcancellations}) satisfies 
$$\varphi_l(1)=\cdots=\varphi_l(k)=A(l)>\varphi_l(k+1)=\cdots=\varphi_l(n)=\bar A(l),$$
where $k$ is the dimension of the vector span of $A^l$.

Applying Theorem 4.3 
of \cite{schon} to 
$$\partial f/\partial x_1=\cdots=\partial f/\partial x_n=0$$with $P_i=A$ and $m_i(l)=\varphi_l(i)$, we get the equality
$$\sharp A = \Vol A - \sum_l (A(l)-\bar A(l))\cdot|(F^n=0)^l|,\eqno{(**)}$$where the sum is taken over all primitive $l\in M^*$, such that the affine span of $A^l$ contains 0. This is because all the other sets of the form $(\cdots)^l$ participating in the statement of Theorem 4.3, 
\cite{schon}, are empty by the considerations of dimension (in the sense of Definition \ref{dhomog}). Moreover, 
$(F^n=0)^l$ is empty unless the affine span of $A^l\cup\bar A^l$ is the whole $\R^n$.

In the latter case, the points in $(F^n=0)^l$ are counted by the subsequent Lemma \ref{ltech0}. Plugging its answer into $(**)$ and 

letting $p_B:\Z^n\to\Z^m$ be a projection whose kernel is generated by $B\subset\Z^n$, we get the first equality in the following chain (which proves the theorem):
$$\Vol A - \sharp A = \sum_l \Vol_\Z p_{A^l}(A^l\cup\bar A^l)\cdot\sum_{\Gamma\subset A^l}e_{A^l}^\Gamma\Vol_\Z\Gamma=$$ $$=\sum_{B\subset A}\Vol_\Z p_B((\conv A)\setminus\conv(A\setminus B))\cdot \sum_{\Gamma\subset B}e_{B}^\Gamma\Vol_\Z\Gamma=\sum_{\Gamma\subset A}e_{A}^\Gamma\Vol_\Z\Gamma.$$ Some explanations to this chain of equalities, proving Theorem \ref{thconcr2}:

1. The sum over $l$ is taken over all primitive $l$ such that the affine span of $A^l$ contains 0, and the other sums are taken over faces $\Gamma$ whose affine span contains 0;

2. The first equality is formula $(**)$ with $|(F^n=0)^l|$ plugged from Lemma \ref{ltech0};

3. The second equality reflects that $p_B((\conv A)\setminus\conv(A\setminus B))$ splits into pieces $p_{A^l}(A^l\cup\bar A^l)$ over all $l$ such that $B=A^l$, and thus its volume is the sum of the volumes of the pieces;

4. The third equality is the inductive definition of Euler obstruction $e^\Gamma_A$ (Proposition \ref{eulobstr}).

\subsection{The $l$-leading terms of $df$}

We study the $l$-leading terms of $df$ for the polynomial $f(x)=\sum_{a\in A}c_a x^a$ and a linear function $l:\Z^n\to\Z$ in case $A^l\cup\bar A^l$ is not contained in a hyperplane. More precisely, we are given a line $L$ not contained in $\{l=0\}$, and no face of $\conv(A^l\cup \bar A^l)$ has affine span containing the line $L\subset\Z^n$ (unless the span is the whole $\R^n$). We describe it with linear equations  $l_1=\cdots=l_{n-1}=0$, define polynomials $\partial_{l_i}  f$ as $\sum_{a\in A} l_i(a)c_ax^a$, and need to count the cardinality of the following set (in the notation of Theorem 4.3, 
\cite{schon}): $$(\partial_{l_1}  f=\cdots=\partial_{l_{n-1}} f=0)^l.\eqno{(\star)}$$Recall that this set does not depend on the choice of $l_i$'s (but only on the given line $L$), and is contained in the orbit $O$ of the toric variety $X_\Sigma$, corresponding to the cone of the fan $\Sigma$ which contains $l$ in its interior.
To formulate the answer, we denote by $p_B:\Z^n\to\Z^m$ a surjection whose kernel is generated by $B\subset\Z^n$, and by $k$ the dimension of the affine span of $A^l$. The answer will be proved assuming by induction that we have already proved Theorem \ref{thconcr2} for polynomials of less than $n$ variables.
\begin{lemma}\label{ltech0}
The number of points in the set $(\star)$ is $$\Vol_\Z p_{A^l+L}\bar A^l\cdot\sum_{\Gamma\subset A^l}e^\Gamma_{A^l}\Vol_\Z\Gamma,$$
where $\Gamma$ ranges over faces of $A^l$ whose affine spans contain 0 (including $A^l$ itself), $\Vol_\Z\Gamma$ is the lattice volume of $\Gamma$ in its affine span (which is always positive), and $\Vol_\Z p_{A^l+L}\bar A^l$ is the lattice volume of this set in $\Z^{n-k-1}$ (which is positive if $\dim p_{A^l+L}\bar A^l={n-k-1}$).
\end{lemma}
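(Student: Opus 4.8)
\emph{Overview of the plan.} The plan is to count $(\star)$, which by construction is a finite subset of the codimension-one orbit $O\subset X_\Sigma$ dual to the ray $\R_{\geqslant 0}l$, by recognizing its defining $l$-initial system on $O$ — which, thanks to the genericity of the coordinate system (so that $\varphi_l(1)=\cdots=\varphi_l(k)=A(l)>\varphi_l(k+1)=\cdots=\varphi_l(n)=\bar A(l)$) and the cancellations of Corollary \ref{critcancellations}, is \emph{block-triangular} — and multiplying the two resulting counts: the first block will be pulled back from a $k$-dimensional subtorus, where it cuts out exactly the critical locus $\{df^l=0\}$ of a generic polynomial supported at $A^l$, counted by Theorem \ref{thconcr2} applied inductively in dimension $k<n$; the second block will restrict, over each solution of the first, to a nondegenerate Laurent system counted by the BKK theorem.

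\emph{Reduction and the split of leading forms.} First I would dispose of the case $0\notin\mathrm{aff}(A^l)$: then no face $\Gamma\subset A^l$ satisfies $0\in\mathrm{aff}(\Gamma)$, so the right-hand side is $0$, while on the other hand $df^l$ cannot vanish identically (its component along $a_0\notin\langle A^l-a_0\rangle$ is the non-vanishing $f^l/x^{a_0}$ for any $a_0\in A^l$), so $(\star)$ is empty too. Granting $0\in\mathrm{aff}(A^l)$, the linear span $U:=\langle A^l\rangle$ is $k$-dimensional, $l$ vanishes on $U$, and $A(l)=0$. Let $W$ be the $(n-1)$-dimensional space of linear forms vanishing on $L$; since $A^l$ is a proper face of $\conv(A^l\cup\bar A^l)$ with span $\ne\R^n$, the hypothesis on $L$ forces $L\not\subseteq U$, hence $W_0:=W\cap U^\perp$ has dimension $n-1-k$ and $W/W_0$ has dimension $k$. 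Because $\langle A^l\cup\bar A^l\rangle=\R^n$, no nonzero form of $W$ vanishes on all of $A^l\cup\bar A^l$, so after the cancellations the $l$-leading parts split: the first $k$ cancelled equations are supported at $A^l\subset U\subset M\cap l^\perp$ and the remaining $n-1-k$ at $\bar A^l$ (after dividing by a fixed monomial), each a Laurent polynomial on $O$.

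\emph{The product count.} The orbit $O$ has character lattice $M_O=M\cap l^\perp$, of which $U\cap M$ is a saturated sublattice; let $O_A$ be the corresponding $k$-dimensional subtorus quotient. The first block, being supported at $A^l$, is pulled back from $O_A$ and there is a system of $k$ logarithmic derivatives of $f^l$ whose directions span $(U\cap M)^*$ rationally, hence it cuts out $\{df^l=0\}$; for generic $f^l\in\C^{A^l}$ (which spans the rank-$k$ lattice $U\cap M$ and avoids $0$) this has exactly $\sum_{\Gamma\subset A^l}e^\Gamma_{A^l}\Vol_\Z\Gamma$ simple solutions by the inductive case of Theorem \ref{thconcr2}. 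Over each such point the second block restricts to a system of $n-1-k$ Laurent equations on the fiber torus, whose monomials project into $M_O/(U\cap M)$ to $p_{A^l+L}\bar A^l$; this polytope is full-dimensional (since $\langle\bar A^l-\bar A^l\rangle+U=l^\perp$) and, by the genericity of $L$ (which keeps every vertex of $\conv\bar A^l$ out of $U+L$ — such a vertex would lie in $U\cap M$, impossible because its $l$-value is $\bar A(l)\ne 0$), is realized by each of the $n-1-k$ equations. Fixing $f^l$ first and varying $\bar f^l$, the coefficients of the second block are generic, so by the BKK theorem each such fiber contributes $\Vol_\Z p_{A^l+L}\bar A^l$ solutions, none at infinity. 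Multiplying the two counts yields the formula, the matching of lattice normalizations being the exact factorization of mixed volumes over a subtorus recalled in Remark \ref{remmvprod}.

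\emph{The main obstacle.} The hard part will be the block-triangular step: verifying that in the generic coordinate system the cancelled $l$-initial system on $O$ really does split into these two blocks with precisely the supports $A^l$ and $p_{A^l+L}\bar A^l$ — in particular that the first block cuts out all of $\{df^l=0\}$ and that each second-block equation has full Newton polytope — and tracking the character lattices of $O$, $O_A$ and the fiber torus carefully enough that the product of a critical-points count (a sum of Euler obstructions times volumes, not itself a mixed volume) and a BKK mixed volume equals $\Vol_\Z p_{A^l+L}\bar A^l\cdot\sum_{\Gamma\subset A^l}e^\Gamma_{A^l}\Vol_\Z\Gamma$ with the normalizations in the statement. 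Once the supports and lattices are pinned down, the count itself is a routine combination of Theorem \ref{thconcr2} in dimension $k$ with the BKK theorem.
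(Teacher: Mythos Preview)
Your overall architecture matches the paper's: choose coordinates so that the $l$-initial system on $O$ is block-triangular, identify the first $k$ cancelled equations as the pullback of $\{df^l=0\}$ from a $k$-dimensional quotient torus, count them by Theorem~\ref{thconcr2} in dimension $k<n$, and multiply by the fiber count. The paper formalizes exactly this via Assumption~\ref{assum1} and the restated Lemma~\ref{ltech1}.

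There is one genuine gap, in your treatment of the second block. You write that ``fixing $f^l$ first and varying $\bar f^l$, the coefficients of the second block are generic, so by the BKK theorem each such fiber contributes $\Vol_\Z p_{A^l+L}\bar A^l$ solutions''. But the $n-1-k$ fiber equations are \emph{not} independently generic in $\C^{p\bar B}$: they are all logarithmic partials $\partial_{l_{k+1}}f_y,\ldots,\partial_{l_{n-1}}f_y$ of the \emph{single} polynomial $f_y:=\bar f^l(y,\cdot)$, so their tuple ranges only over a proper linear subspace of $(\C^{p\bar B})^{n-1-k}$, and the BKK nondegeneracy hypothesis is not available. The paper closes this by recognizing the second block as the critical complete intersection $\{df_y=0\}$ on $\CC^{n-1-k}$ and applying Theorem~\ref{thconcr2} \emph{a second time}, inductively. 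The genericity of $L$ (your own hypothesis on it, or Assumption~\ref{assum1}.4 in the paper) ensures that no proper face of $p\bar B=p_{A^l+L}\bar A^l$ has affine span through $0$, so the Euler-obstruction sum in Theorem~\ref{thconcr2} collapses to the single top term $\Vol_\Z p\bar B$. Replace your BKK appeal with this second inductive use of Theorem~\ref{thconcr2} and the argument goes through.
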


For the proof, we shall change the coordinates in the torus $\CC^n$ and its character lattice $\Z^n$, to bring the functions $l_1,\ldots,l_{n-1},l_n:=l$ and the line $L$ to the following convenient position.
\begin{assum}\label{assum1}
1. Of linear functions $l_1,\ldots,l_{n}$ on $\Z^n$, the first $l_1=\cdots=l_{n-1}=0$ define a line $L\subset\Z^n$, the last $l_{k+1}=\cdots=l_{n}=0$ define the plane generated by the first $k$ coordinate axes, and $l_n$ is the last coordinate function;

2. A finite subset $A\subset\Z^n$ is not contained in an affine hyperplane, and the two minimal values of $l_n$ on it equal $0$ and $m>0$;

3. For the subset $B:=A\cap\{l_n=0\}$, the affine span is the coordinate plane $l_{k+1}=\cdots=l_{n}=0$;

4. For the subset $\bar B:=A\cap\{l_n=m\}$, no face of $\conv(B\cup \bar B)$ has affine span containing the line $L\subset\Z^n$ (unless the span is the whole $\R^n$);

5. In the latter case, if the affine span of $B\cup \bar B$ is the whole $\R^n$, the line $L$ is not in the hyperplane  $l_n=0$ (or, equivalently, $l_1,\ldots,l_n$ are lineraly independent).
\end{assum}
In this setting, the Laurent polynomial $f\in\C^A$ is a regular function on $\C\times\CC^{n-1}\supset\CC^n$, the toric orbit $O$ containing the sought set $(\star)$ identifies with $H:=\{0\}\times\CC^{n-1}$, and the derivatives $\partial_{l_i} f,\,i>k$, are divisible by $x_n^m$. 

Thus 
the sought set $(\star)$ is defined by the following system of regular equations on $H$:
$$\partial_{l_1}f=\cdots=\partial_{l_k}f=x_n^{-m}\partial_{l_{k+1}}f=\cdots=x_n^{-m}\partial_{l_{n-1}}f=0.\eqno{(*)}$$Thus Lemma \ref{ltech0} takes the following form, denoting the coordinate projection $(l_{k+1},\ldots,l_{n-1}):\Z^n\to\Z^{n-k-1}$ by $p$.
\begin{lemma}\label{ltech1}
For generic $f\in\C^A$, the number of roots of system $(*)$ in $H$ equals $$\Vol_\Z p\bar B\cdot\sum_{\Gamma\subset B}e_B^\Gamma \Vol_\Z\Gamma.$$

\end{lemma}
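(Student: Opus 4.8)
The plan is to run the induction on $n$ (Theorem \ref{thconcr2} being assumed for polynomials in fewer than $n$ variables) by splitting the system $(*)$ on $H\simeq\CC^{n-1}$ into a ``$B$-block'' involving only the first $k$ coordinates and a ``$\bar B$-block'', solving the first and fibring the second over its solutions. Write $f=f_0+x_n^{m}f_m+(\text{higher powers of }x_n)$; by Assumption \ref{assum1} the restriction $f_0=f|_B\in\C^{B}$ involves only $x_1,\dots,x_k$, while $f_m$ is a Laurent polynomial in $x_1,\dots,x_{n-1}$ supported on $\pi\bar B$ (with $\pi$ the projection forgetting the last coordinate, injective on $\bar B$), and $f_0,f_m$ are generic with disjoint coefficient sets. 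Since for $i>k$ the functional $l_i$ is the $i$-th coordinate (Assumption \ref{assum1}.1), restricting $(*)$ to $x_n=0$ yields
$$\partial_{l_1}f_0=\cdots=\partial_{l_k}f_0=0,\qquad \partial_{l_{k+1}}f_m=\cdots=\partial_{l_{n-1}}f_m=0 .$$

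For the $B$-block, $l_1,\dots,l_k$ restrict to linearly independent functionals on the coordinate $k$-plane $\{l_{k+1}=\cdots=l_n=0\}$ (the $l_i$ are independent on $\Z^n$, and by Assumption \ref{assum1}.5 the line $L=\ker(l_1,\dots,l_{n-1})$ meets that plane only in $0$). Passing to coordinates on $\CC^k$ dual to $l_1,\dots,l_k$, the $B$-block becomes $d\tilde f_0=0$ for a generic $\tilde f_0$ supported on a lattice-isomorphic copy of $B$, so by Theorem \ref{thconcr2} (inductively; recall $0\notin A$ in the setting of Theorem \ref{thconcr2}, hence $0\notin B$) it has exactly $N_B:=\sum_{\Gamma\subset B}e^{\Gamma}_{B}\Vol_\Z\Gamma$ solutions $x'\in\CC^{k}$, the sum over faces $\Gamma$ of $B$ whose affine span contains $0$ --- the second factor of the Lemma. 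For each such $x'$, substituting it into the $\bar B$-block gives $d\phi=0$ on $\CC^{\,n-1-k}$, where $\phi:=f_m(x',\cdot)$ is supported on $p\bar B$ and is generic there (for any $x'\in\CC^{k}$ the substitution map $\C^{\pi\bar B}\to\C^{p\bar B}$ is surjective). The solution set of $(*)$ is the fibred product of the $B$-block solution set with these loci $d\phi=0$, so its cardinality is $N_B$ times the number of critical points of a generic $\phi\in\C^{p\bar B}$ on $\CC^{\,n-1-k}$, which by Theorem \ref{thconcr2} again equals $\sum_{\Delta\subset p\bar B}e^{\Delta}_{p\bar B}\Vol_\Z\Delta$ over faces $\Delta$ of $p\bar B$ with $0\in{\rm aff}\,\Delta$.

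It remains to show this last sum equals $\Vol_\Z p\bar B$, which is the crux and the only place Assumption \ref{assum1}.4 enters. First, $p\bar B$ is full-dimensional in $\Z^{n-1-k}$: under the standing hypothesis ${\rm aff}(B\cup\bar B)=\R^n$ (otherwise $(\star)$ is empty), and given that $B$ spans the coordinate $k$-plane while $\bar B\subset\{l_n=m\}$, spanning all of $\R^n$ forces the $(l_{k+1},\dots,l_{n-1})$-image of $\bar B$ to span $\R^{n-1-k}$. Next, suppose $\Delta=p(\bar B^{\sigma})$ is a proper face with $0\in{\rm aff}\,\Delta$, where $\sigma$ is a functional in $l_{k+1},\dots,l_{n-1}$ non-constant on $\bar B$. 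Then ${\rm aff}(\bar B^{\sigma})$ contains a point $v$ with $l_{k+1}(v)=\cdots=l_{n-1}(v)=0$; since $v\in\{l_n=m\}$ and a generator $w$ of $L$ satisfies $l_n(w)\ne0$ (Assumption \ref{assum1}.5) while $l_{k+1}(w)=\cdots=l_{n-1}(w)=0$, we may write $v=v_B+tw$ with $v_B$ in the coordinate $k$-plane and $t=m/l_n(w)\ne0$. Let $G$ be the face of $\conv(B\cup\bar B)$ on which $\sigma-\tfrac{M}{m}l_n$ is maximal, where $M=\max_{\bar B}\sigma$: this functional is $0$ on $B$ (Assumption \ref{assum1}.3) and $\leqslant 0$ on $\bar B$ with equality exactly on $\bar B^{\sigma}$, so $G=B\cup\bar B^{\sigma}$, a proper face. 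Now ${\rm aff}\,G\supseteq{\rm aff}\,B$, which is the coordinate $k$-plane and hence contains $0$; and $tw=v-v_B$ with $v\in{\rm aff}(\bar B^{\sigma})\subseteq{\rm aff}\,G$ and $v_B\in{\rm aff}\,B\subseteq{\rm aff}\,G$, so $w\in{\rm aff}\,G$. Thus $L\subseteq{\rm aff}\,G\ne\R^n$, contradicting Assumption \ref{assum1}.4. (The same argument with $\bar B^{\sigma}$ a single point excludes $0$ from being a vertex of $p\bar B$.) Hence only $\Delta=p\bar B$ survives, giving the count $\Vol_\Z p\bar B\cdot\sum_{\Gamma}e^{\Gamma}_{B}\Vol_\Z\Gamma$.

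The main obstacle is this last paragraph: recognising that Assumption \ref{assum1}.4 is exactly what forces the fibre count of $d\phi=0$ to collapse to a bare lattice volume, and choosing the auxiliary functional $\sigma-\tfrac{M}{m}l_n$ so that the offending face $G$ is simultaneously proper and has $L$ in its affine span. Everything else --- the two reductions to systems $d(\cdot)=0$, independence of the relevant $l_i$ on the coordinate subspaces, genericity of $\phi$, and the fibred-product count --- is routine given Theorem \ref{thconcr2} and Assumption \ref{assum1}.
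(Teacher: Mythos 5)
Your proof follows the same route as the paper: split system $(*)$ on $H$ into the square $B$-block $\partial_{l_1}f=\cdots=\partial_{l_k}f=0$ (equivalent to $d(f|_H)=0$, counted by induction via Theorem~\ref{thconcr2}), and for each of its roots $y$ apply Theorem~\ref{thconcr2} again to the generic polynomial supported on $p\bar B$ obtained from the $\bar B$-block, then multiply. The one place you go beyond the paper's text is the last paragraph: the paper merely asserts ``no proper face of $p\bar B$ has affine span containing $0$ by Assumption~\ref{assum1}.4,'' whereas you actually derive this, constructing the auxiliary face $G=B\cup\bar B^{\sigma}$ of $\conv(B\cup\bar B)$ supported by $m\sigma-M l_n$ and showing $L\subseteq\operatorname{aff}G$ --- a correct and worthwhile verification of a step the paper leaves implicit.
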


\begin{proof}
Note that the subsystem $$\partial_{l_1}f=\cdots=\partial_{l_k}f=0$$on $H$ is equivalent to $df|_H=0$ and is square: it depends only on the first $k<n$ variables, because $f|_H\in\C^{B}$, and $B$ belongs to the first coordinate $k$-plane. Thus the number of roots $y\in\CC^k$ of this subsystem is given by Theorem \ref{thconcr2}: it is $\sum_{\Gamma\subset B}e_B^\Gamma\Vol_\Z\Gamma.$

Plugging such a root $y$ into $x_n^{-m}\partial_{l_n}f$ restricted to $H$, we get a generic polynomial $f_y\in\C^{p\bar B}$. (To put this formally, the multivalued map sending every $f\in\C^A$ to the collection of $f_y\in\C^{p\bar B}$ over all roots $y$ of system $(*)$, is dominant.) Thus the number of roots of the critical complete intersection $df_y=0$ on $\CC^{n-k-1}$ is again given by Theorem \ref{thconcr2}. And this time it equals $\Vol_\Z p\bar B$, because no proper face of $p\bar B$ has affine span containing 0 by Assumption \ref{assum1}.4.

Since the roots of the initial system $(*)$ are in one to one correspondence with the roots of the systems $df_y=0$ over all roots $y$ of the system $df|_H=0$, the total number of roots is the product of the numbers of roots of $df_y=0$ and $df|_H=0$. They are computed above.
\end{proof}

\vspace{1ex}
\noindent
London Institute for Mathematical Sciences, UK\\
\textit{Email}: aes@lims.ac.uk
\end{document}